\documentclass[12pt]{amsart}
\usepackage{color}
\usepackage{amsmath,amsfonts,amsthm,amssymb} 
\usepackage{graphicx}
\usepackage{comment}
\usepackage{hyperref}
\usepackage{cite}
\newtheorem{theorem}{Theorem}
\newtheorem{lemma}[theorem]{Lemma}
\newtheorem{corollary}[theorem]{Corollary}
\newtheorem{proposition}[theorem]{Proposition}
\theoremstyle{definition}
\newtheorem{remark}[theorem]{Remark}

\usepackage[us,12hr]{datetime}
\numberwithin{equation}{section} \numberwithin{theorem}{section}
\newcounter{stepctr}
{\end{list}}

\def\XXint#1#2#3{{\setbox0=\hbox{$#1{#2#3}{\int}$}
     \vcenter{\hbox{$#2#3$}}\kern-.5\wd0}}

\newcommand{\mc}[1]{\mathcal{#1}}
\newcommand{\mbb}[1]{\mathbb{#1}}

\DeclareMathOperator{\tr}{tr}

\newcommand{\M}{\mathcal{M}}

\newcommand{\R}{\mbb{R}}

\newcommand{\pd}{\partial}
\newcommand{\cd}{\nabla}

\date{\today, \currenttime}

\protected\def\vts{%
  \ifmmode
    \mskip0.5\thinmuskip
  \else
    \ifhmode
      \kern0.08334em
    \fi
  \fi
}

\newcommand{\lb}{\left(}
\newcommand{\rb}{\right)}
\newcommand{\lsb}{\left[}
\newcommand{\rsb}{\right]}

\def\labelitemi{--}
\def\ba #1\ea {\begin{align} #1\end{align}}
\def\bann #1\eann {\begin{align*} #1\end{align*}}
\def\ben #1\een {\begin{enumerate} #1\end{enumerate}}
\def\bi #1\ei {\begin{itemize}\renewcommand\labelitemi{--} #1\end{itemize}}

\newcommand{\inner}[2]{\left\langle#1,#2\right\rangle} 

\setcounter{tocdepth}{1}

\address{Department of Mathematics, University of Tennessee Knoxville, Knoxville TN, USA, 37996-1320}
\address{School of Mathematical and Physical Sciences, The University of Newcastle, Newcastle, NSW, Australia, 2308}
\email{mlangford@utk.edu}
\email{mathew.langford@newcastle.edu.au}

\address{School of Mathematical Sciences, Queen Mary University of London, Mile End Road, London, UK, E1 4NS}
\email{h.nguyen@qmul.ac.uk}

\begin{document}
\title[Sharp pinching estimates for MCF in the sphere]{Sharp pinching estimates for mean curvature flow in the sphere}
\author{Mat Langford}
\author{Huy The Nguyen}
\address{}
\email{}
\subjclass[2000]{Primary 53C44}
\begin{abstract} 
We prove a suite of asymptotically sharp quadratic curvature pinching estimates for mean curvature flow in the sphere which generalize Simons' rigidity theorem for minimal hypersurfaces. We then obtain derivative estimates for the second fundamental form which we utilize, via a compactness argument, to obtain a convexity estimate. Together, the convexity and cylindrical estimates yield a partial classification of singularity models. We also obtain new rigidity results for ancient solutions.
\end{abstract}
\maketitle

\tableofcontents

\section{Introduction}

Given $n\ge 2$ and $m\le\lceil\frac{n}{2}\rceil$, we study the evolution by mean curvature of hypersurfaces of $\mathbb S_K^{n+1}$, the round sphere in $\R^{n+2}$ of sectional curvature $K$, satisfying the quadratic pinching condition
\begin{equation}\label{eq:strict quadratic pinching}
\begin{cases}
\vert A\vert ^2<\frac{3}{4}H^2+\frac{4}{3}K& \text{if}\; n=2\; \text{and}\; m=1\,,\\
\vert A\vert ^2<\frac{3}{5}H^2+\frac{8}{3}K& \text{if}\; n=3\; \text{and}\; m=2\,,\\
\vert A\vert ^2<\frac{1}{n-m}H^2+2mK& \text{if}\; n\ge3\;\text{and}\; m\le \lfloor \frac{n}{2}\rfloor\,,\\
\vert A\vert ^2<\frac{2}{n}H^2+nK& \text{if}\; n\ge 4\; \text{and}\; m=\lceil \frac{n}{2}\rceil\,,
\end{cases}
\end{equation}
where $A$ is the second fundamental form and $H$, its trace, is the mean curvature. The first case and the third case with $m=1$ were treated in \cite{Hu87} (cf. \cite{An02}), while the second case and the third case with $m=2$ were treated in \cite{LangfordNguyen}. In the former, it was shown that the flow preserves the condition and drives solutions either to a ``small $\mathbb S^n$'' in finite time or to a ``large $\mathbb S^n$'' in infinite time. In the latter, it was shown that the flow preserves the condition and decomposes the solution, via surgery on ``necks'' (following Huisken and Sinestrari \cite{HuSi09}), into a finite number of components, each of which is either a small $\mathbb S^n$, a large $\mathbb S^n$, or $\mathbb S^1$ times a small $\mathbb S^{n-1}$. Here, we extend some of the methods of \cite{Hu87,LangfordNguyen} to treat the entire class of conditions \eqref{eq:strict quadratic pinching}. Namely, we show that \eqref{eq:strict quadratic pinching} is preserved and improves, becoming sharp when either the curvature or the time of existence becomes large. These estimates are analogues of \cite[Theorem 5.2]{HuSi15} and \cite[Corollary 1.2]{L17} (cf. \cite[Theorem 1.1]{LaLy}).

\subsection*{Acknowledgements}  
M.~Langford was supported by an Australian Research Council DECRA fellowship. H.~T.~Nguyen was supported by the EPSRC grant EP/S012907/1.

\section{Preserved curvature conditions}

\subsection{Quadratic curvature condition}
If the strict quadratic curvature inequality \eqref{eq:strict quadratic pinching} holds on a hypersurface of $\mathbb{S}_K^{n+1}$, then we can find some $\alpha>0$ such that
\begin{align}\label{eq:uniform quadratic pinching}
|A|^{2}\leq\frac{1}{n-m+\alpha}H^{2}+2(m-\alpha)K\,.  
\end{align}
Without loss of generality, $\alpha\in(0,1)$. When $m\le \min\left\{\tfrac{n}{2},\tfrac{2(n-1)}{3}\right\}$, this inequality is preserved under mean curvature flow for any $\alpha\in(0,1)$. When $m\ge \min\left\{\tfrac{n}{2},\tfrac{2(n-1)}{3}\right\}$, the inequality is preserved if $\alpha>m-\min\{\frac{n}{2},\frac{2(n-1)}{3}\}$.

\begin{theorem}[Cf. {\cite[1.4 Lemma]{Hu86}}]
Let $X:\mc M^n\times[0,T)\to \mbb S_K^{n+1}$ be a solution to mean curvature flow such that \eqref{eq:uniform quadratic pinching} holds on $\mc M^n\times\{0\}$ for some $\alpha\in(0,1)$. If $m<\alpha+\min\{\frac{n}{2},\frac{2(n-1)}{3}\}$, then \eqref{eq:uniform quadratic pinching} holds on $\mc M^n\times\{t\}$ for all $t\in[0,T)$.
\end{theorem}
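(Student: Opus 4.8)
The plan is to apply the tensor maximum principle of Hamilton to the symmetric $2$-tensor
\[
T_{ij} = \frac{1}{n-m+\alpha}\,H\,A_{ij} + (m-\alpha)K\,g_{ij} - A_{ik}A_{kj}\,,
\]
whose nonnegativity is equivalent to the (slightly strengthened) quadratic pinching condition restricted to the worst direction. Actually, since \eqref{eq:uniform quadratic pinching} is a scalar inequality, it is cleaner to work directly with the function
\[
f = |A|^2 - \frac{1}{n-m+\alpha}H^2 - 2(m-\alpha)K
\]
and show $f\le 0$ is preserved via the scalar maximum principle, reducing the proof to a pointwise computation at an interior spatial maximum of $f$ where $f=0$. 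First I would recall the evolution equations under mean curvature flow in $\mathbb S_K^{n+1}$: writing $\pat |A|^2 = \Delta|A|^2 - 2|\nabla A|^2 + 2|A|^2(|A|^2 - 2nK) + 4nK|A|^2$—more precisely (using the Gauss equation to absorb the ambient curvature) $\pat|A|^2=\Delta|A|^2-2|\nabla A|^2+2Z$ with $Z=|A|^4 - \ldots + cK$-terms, and $\pat H^2 = \Delta H^2 - 2|\nabla H|^2 + 2H^2(|A|^2+nK)$, while $K$ is constant. Combining these, $f$ satisfies a reaction–diffusion inequality of the form $\pat f \le \Delta f + \langle \text{lower order}, \nabla f\rangle + R$, where the key is to show the reaction term $R\le 0$ whenever $f=0$.

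The second step is the estimate of the gradient terms. The combination $-2|\nabla A|^2 + \frac{2}{n-m+\alpha}|\nabla H|^2$ is not manifestly nonpositive, so—exactly as in Huisken's original argument—I would invoke the refined Kato-type inequality for pinched hypersurfaces: there is a constant $\delta(n,m,\alpha)>0$ with $|\nabla A|^2 \ge (1+\delta)\frac{1}{n}|\nabla H|^2$ in the pinched regime. This gives $-2|\nabla A|^2 + \frac{2}{n-m+\alpha}|\nabla H|^2 \le \bigl(\frac{2}{n-m+\alpha} - \frac{2(1+\delta)}{n}\bigr)|\nabla H|^2$, and here is exactly where the hypothesis $m < \alpha + \min\{\tfrac n2, \tfrac{2(n-1)}{3}\}$ enters: it is the condition under which $n-m+\alpha \ge \frac{n}{1+\delta}$ for the admissible $\delta$, forcing the bracket to be $\le 0$ so the bad gradient terms can be discarded (or, in the borderline cases, absorbed into the strictly negative part of the reaction term). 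I expect this to be the main obstacle—pinning down the sharp Kato constant and checking the arithmetic against the three regimes ($m\le \tfrac n2$, $m\le\tfrac{2(n-1)}{3}$, and the $n=2,3$ exceptional cases) is the delicate bookkeeping.

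Finally, with the gradient terms controlled, it remains to show the purely algebraic reaction term $R$ is $\le 0$ on the boundary set $\{f=0\}$. Writing everything in terms of the eigenvalues $\kappa_1,\dots,\kappa_n$ of $A$, the reaction term reduces to a polynomial inequality of the form
\[
2|A|^2\Bigl(|A|^2 - \tfrac{1}{n-m+\alpha}H^2\Bigr) - \text{(curvature cross terms with }K) \;\le\; \tfrac{2}{n-m+\alpha}H^2 |A|^2 - \ldots
\]
which, after substituting the constraint $|A|^2 = \tfrac{1}{n-m+\alpha}H^2 + 2(m-\alpha)K$, becomes a homogeneous inequality in the $\kappa_i$ and $HK^{1/2}$. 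I would handle this by the standard device of diagonalizing and estimating $|A|^4$ from above in terms of $|A|^2$ and $H^2$ using the constraint, together with the elementary bound relating $\sum\kappa_i^3$, $H$, and $|A|^2$; the cases $n=2, m=1$ and $n=3,m=2$ require the special coefficients $\tfrac34, \tfrac43$ and $\tfrac35,\tfrac83$ precisely because the naive estimate is not sharp there, and one uses instead the refinement available in low dimensions (as in \cite{Hu87, LangfordNguyen}). Once $R\le0$ on $\{f=0\}$ with the gradient terms shown nonpositive, Hamilton's maximum principle for the scalar $f$ yields $f\le 0$ on $\mc M^n\times[0,T)$, which is \eqref{eq:uniform quadratic pinching}.
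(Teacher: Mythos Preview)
Your overall framework is correct and matches the paper's: work with the scalar $f = |A|^2 - a_m H^2 - b_m K$ (where $a_m = \tfrac{1}{n-m+\alpha}$, $b_m = 2(m-\alpha)$), control the gradient terms via the Kato-type inequality $|\nabla A|^2 \ge \tfrac{3}{n+2}|\nabla H|^2$, and handle the reaction term algebraically before invoking the maximum principle. But two points in your execution are off.

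First, you attribute the entire hypothesis $m < \alpha + \min\{\tfrac{n}{2}, \tfrac{2(n-1)}{3}\}$ to the Kato step. In fact only the $\tfrac{2(n-1)}{3}$ half is used there (it is equivalent to $a_m < \tfrac{3}{n+2}$, so that $-|\nabla A|^2 + a_m|\nabla H|^2 \le 0$). The $\tfrac{n}{2}$ half, equivalent to $b_m < n$, enters separately in the reaction-term estimate below.

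Second, and more importantly, your reaction-term plan rests on a misconception. No $\tr A^3 = \sum_i\kappa_i^3$ terms appear in the evolution of $|A|^2$ or of $H^2$; the reaction in $(\partial_t - \Delta)f$ is a polynomial purely in the scalars $|A|^2$, $H^2$, and $K$. So there is nothing to diagonalize and no eigenvalue inequalities are needed. In the paper the reaction is simply rewritten: after separating off the piece $2f(|A|^2+nK)$, the remaining zero-order contribution is
\[
2b_m K(|A|^2 + nK) - 4nK\bigl(|A|^2 - \tfrac{1}{n}H^2\bigr) = -2K(2n-b_m)\Bigl(|A|^2 - \tfrac{2}{2n-b_m}H^2 - \tfrac{nb_m}{2n-b_m}K\Bigr),
\]
and the identity $\tfrac{2}{2n-b_m} = a_m$ together with $\tfrac{n}{2n-b_m} \le 1$ (this is where $m-\alpha < \tfrac{n}{2}$ enters) bounds the right-hand side by $-2K(2n-b_m)f$. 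One obtains the linear inequality $(\partial_t - \Delta)f \le 2(|A|^2 + (b_m-2n)K)f$, and the scalar maximum principle finishes. The low-dimensional cases $n=2,\,m=1$ and $n=3,\,m=2$ require no separate argument; they are absorbed into the hypothesis via the constraints $\alpha > \tfrac{1}{3}$ and $\alpha > \tfrac{2}{3}$, which are exactly what make both inequalities $a_m<\tfrac{3}{n+2}$ and $b_m<n$ hold.
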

\begin{proof}
Suppose that \eqref{eq:uniform quadratic pinching} holds on the initial hypersurface for some $m\in\{1,\dots,n\}$ and some $\alpha\in(0,1)$. If $m<\alpha+\min\{\frac{n}{2},\frac{2(n-1)}{3}\}$, then
\[
\frac{1}{n-m+\alpha}<\frac{3}{n+2}\;\;\text{and}\;\; \frac{n}{2n-2(m-\alpha)}<1\,.
\]
Set
\[
a_{m}:=\frac{1}{n-m+\alpha}\;\; \text{and}\;\; b_{m}:=2(m-\alpha)\,.
\]
Using the evolution equations for $H^2$ and $\vert A\vert^2$ (see, for example, \cite[Equations (2.11) and (2.12)]{LangfordNguyen}), we obtain
\begin{align*}
(\partial_{t}-\Delta)\!\left(|A|^{2}-a_{m} H^{2}\right)={}&- 2 \left(|\nabla A|^{2} 
- a_{m}|\nabla H|^{2}\right) + 2 b_{m} K ( |A|^{2} + nK) \\
&+ 2 ( |A|^{2} - a_{m} H^{2} - b_{m} K ) ( | A|^{2} + n K )\\
{}& - 4 nK \left(|A|^{2} - \tfrac{1}{n}H^{2}\right).
\end{align*}
Since $\frac{2}{2n-b_{m}}=a_m$ and $\frac{n}{2n-b_{m}}\leq 1$, we can estimate
\begin{align*}
2b_{m} K (|A|^{2} + nK)-4 nK&\left(|A|^{2}- \tfrac{1}{n} H^{2} \right)\\
={}& 2 K\left((b_{m}- 2n)|A|^{2} + 2 H^{2} + b_{m} n K\right) \\
={}&-2K( 2n-b_{m})\left(|A|^{2} - \tfrac{ 2}{2n - b_{m}} H^{2} - \tfrac{ nb_{m} }{ 2n - b_{m}} K\right)\\
\leq{}&-2K(2n-b_{m})\left(|A|^{2}-a_mH^{2}-b_{m}K\right)\,.
\end{align*}
Estimating $a_m \leq \frac{3}{n+2}$ and applying the Kato-type inequality $\vert \cd A\vert^2\ge \frac{3}{n+2}\vert \cd H\vert^2$, we arrive at
\begin{align*}
(\partial_{t}-\Delta)\big(|A|^{2}-a_{m} H^{2}-{}&b_mK\big)\\
\leq{}& 2\left(|A|^{2}+(b_m-2n)K\right)\left(|A|^{2}-a_{m} H^{2}-b_{m}K\right).
\end{align*}
The claim now follows from the maximum principle.
\end{proof}

Note that
\[
\min\left\{\tfrac{n}{2},\tfrac{2(n-1)}{3}\right\}=\begin{cases} \frac{2(n-1)}{3}=\frac{2}{3}, \frac{4}{3} & \text{when}\;\; n=2,3\\
\frac{n}{2} & \text{when}\;\; n=4,\dots.
\end{cases}
\]
The non-vacuous cases are therefore:
\begin{itemize}
\item $n=2$: $m=1$ and $\alpha>\frac{1}{3}$.
\item $n=3$: $m=1$ and $\alpha>0$, or $m=2$ and $\alpha>\frac{2}{3}$.
\item $n=2k\ge 4$: $m=1,\dots,\frac{n}{2}$ and $\alpha>0$.
\item $n=2k+1\ge 5$: $m=1,\dots,k$ and $\alpha>0$, or $m=k+1$ and $\alpha>\frac{1}{2}$.
\end{itemize}


\subsection{Rigidity}

The quadratic curvature condition \eqref{eq:strict quadratic pinching} is optimal for $m$-cylindrical estimates when $n\ge 4$ and $m\le \frac{n}{2}$. Indeed, consider the hypersurfaces $\mc M^{k, n-m}(r,s) = \mathbb S^{m}(r) \times \mathbb S^{n-m}(s)$ with $r^{2} + s^{2} = 1$, where $\mathbb S^m(r)$ is the $m$ dimensional sphere of radius $r$. The second fundamental forms have eigenvalues $\lambda$ with multiplicity $m$ and $\mu$ with multiplicity $n-m$ and $ \lambda \mu = -1$. Observe that
\begin{align*}
 |A|^{2} & = \frac{ms^{4} + (n-m) r^{4} } { r^{2}s^{2}},
\end{align*}
and
\begin{align*}
 H
 & = \frac{(n-m) r^{2} - ms ^{2} } { rs},
\end{align*}
so that
\begin{align*}
 H^{2} & = \frac{(n-m)^2r^{4} + m^2s^{4} - 2m(n-m) r^{2} s^{2} } { r^{2} s^{2} },
\end{align*}
which then yields
\begin{align*}
|A|^2-\frac{1}{n-m}H^2-2m= \frac{m(n-2m)}{(n-m)} \frac{s^2}{r^2}. 
\end{align*}
Thus, for any $\varepsilon>0$, there is a submanifold of the topological form $ \mathbb{S}^m \times \mathbb{S}^{n-m}$ satisfying 
\begin{align*}
|A| ^2-\frac{1}{n-m}H^2-2m\le \varepsilon\,.
\end{align*} 


\subsection{A class of hypersurfaces}

Given $n\ge 2$, $m\le n$, $K>0$, $\alpha\in(0,1)$, $V<\infty$ and $\Theta<\infty$ such that $m-\alpha<\min\{\frac{n}{2},\frac{2(n-1)}{3}\}$, we shall work with the class $\mathcal{C}^n_m(K,\alpha,V,\Theta)$ of hypersurfaces $X:\M\to\mathbb{S}^{n+1}_K$ satisfying
\begin{enumerate}
\item $\displaystyle \max_{\mathcal{M}^n\times\{0\}}\big(|A|^2-\tfrac{1}{n-m+\alpha}H^2\big)\le 2(m-\alpha)K^2$,
\item $\displaystyle \mu_0(\mathcal{M}^n)\le VK^{-\frac{n}{2}}$, and
\item $\displaystyle \max_{\mathcal{M}^n\times\{0\}}H^2\leq \Theta K$,
\end{enumerate}
where $\mu_t$ is the measure induced by $X(\cdot,t)$. Every properly immersed hypersurface of $\mathbb{S}^{n+1}_K$ which satisfies the strict quadratic pinching condition \eqref{eq:strict quadratic pinching} lies in the class $\mathcal{C}^n_m(m,\alpha,V,\Theta)$ for some choice of parameters $\alpha$, $V$, and $\Theta$.

\section{Cylindrical estimates}\label{ssec:cylindrical estimate}
 
\begin{theorem}[Cylindrical estimates (Cf. \cite{Hu84,Hu87,HuSi09,LangfordNguyen,Ng15})]\label{thm:cylindrical estimate}
Let $X:\M^n\times[0,T)\to\mathbb{S}_K^{n+1}$ be a solution to mean curvature flow with initial condition in the class $\mathcal{C}_m^n(K,\alpha,V,\Theta)$. There exist $\delta=\delta(n,m,\alpha)>0$, $\eta_0=\eta_0(n,m,\alpha)>0$ and, for every $\eta\in(0,\eta_0)$, $C_\eta=C_\eta(n,\alpha,V,\Theta,\eta)<\infty$ such that
\begin{align}\label{eq:cylindrical estimate}
|A|^2-\frac{1}{n-m+1}H^2 \leq \eta H^2 + C_\eta K\vts\mathrm{e}^{-2\delta Kt} \quad\text{in}\quad \mc M^n\times[0,T)\,.
\end{align}
\end{theorem}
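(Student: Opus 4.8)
The plan is to follow the Stampacchia iteration scheme of Huisken (as in \cite{Hu84,Hu87}) adapted to the ambient sphere, combined with the strong maximum principle to first establish a pointwise improvement along the flow. The strategy proceeds in four stages.

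\emph{Step 1: A reaction–diffusion inequality for the pinching quantity.} Write $f_\sigma := |A|^2 - \left(\tfrac{1}{n-m+1} + \sigma\right)H^2 - c_\sigma K$ for suitable constants $\sigma > 0$ small and $c_\sigma$, and compute $(\partial_t - \Delta)f_\sigma$ using the evolution equations for $|A|^2$ and $H^2$ exactly as in the proof of the preservation theorem above. The key algebraic point is that the cubic reaction terms, after applying the Kato-type inequality $|\nabla A|^2 \ge \tfrac{3}{n+2}|\nabla H|^2$ and the sharpened Kato inequality $|\nabla A|^2 - \tfrac{1}{n-m+1}|\nabla H|^2 \ge \varepsilon(n,m) \, |\nabla A|^2$ available precisely when \eqref{eq:uniform quadratic pinching} holds with room to spare, produce a good negative gradient term $-\varepsilon|\nabla A|^2$ together with a reaction term controlled by $f_\sigma$ itself plus a term of the form $-c\, K\, g$, where $g$ measures the ``distance to the cylinder'' $\mathbb S^1 \times \mathbb S^{n-1}$. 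The upshot should be
\begin{align*}
(\partial_t - \Delta) f_\sigma \le C(|A|^2 + K)\, f_\sigma + \sigma H^2 \cdot (\text{something bounded}) - \varepsilon |\nabla A|^2 \quad \text{whenever } f_\sigma > 0,
\end{align*}
with the crucial gain coming from the curvature-term structure specific to dimension/codimension $(n,m)$.

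\emph{Step 2: Poincaré–Sobolev and the $L^p$ estimate.} Test the inequality from Step~1 against $f_\sigma^{p-1} H^\beta$ for large $p$ and a judiciously chosen (possibly negative) power $\beta$, integrate over $\mathcal M^n$, and use the divergence structure together with the Michael–Simon Sobolev inequality on submanifolds of the sphere to absorb the bad terms into the good gradient term $-\varepsilon\int |\nabla A|^2 f_\sigma^{p-2}H^\beta$. Here the volume bound (2) and the curvature bound (3) in the definition of $\mathcal C^n_m(K,\alpha,V,\Theta)$ are used to control the constants and to ensure the relevant integrals are finite; the factor $\mathrm e^{-2\delta Kt}$ will emerge from the strictly negative linear-in-$K$ part of the reaction term (the $-4nK(|A|^2 - \tfrac1n H^2)$ contribution combined with the curvature terms) once $\sigma$ is chosen small relative to $\delta$. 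This yields a differential inequality for $\|f_{\sigma,+}\|_{L^p(\mu_t)}$ of the form $\frac{d}{dt}\|f_{\sigma,+} H^{\beta/p}\|_p^p \le -\delta' K \|\cdots\|_p^p + (\text{lower order})$, which after iterating $p \to \infty$ (Stampacchia / De Giorgi iteration) gives $\sup_{\mathcal M^n \times \{t\}} f_\sigma \le C K \mathrm e^{-2\delta K t}$.

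\emph{Step 3: Strong maximum principle for the borderline case.} The $L^p$ argument of Step~2 requires that the constant $\sigma$ be strictly positive; to reach the exponent $\tfrac{1}{n-m+1}$ itself (i.e. $\sigma = 0$ with only the $\eta H^2$ slack) one combines the above with the strong maximum principle applied to the quotient $|A|^2 / H^2$ near its supremum, exploiting that any compact solution attaining equality $|A|^2 = \tfrac{1}{n-m+1}H^2$ somewhere must split as a product $\mathbb S^1 \times \mathbb S^{n-1}$ and hence is ruled out unless it is such a cylinder. Packaging the contribution from a small neighborhood of the cylindrical locus into the $\eta H^2$ term is what allows the clean statement \eqref{eq:cylindrical estimate}; the constants $\delta$ and $\eta_0$ depend only on $(n,m,\alpha)$ because they come from the algebraic Kato-type inequalities and the preservation range, while $C_\eta$ picks up the dependence on $V$ and $\Theta$ through the Sobolev constant and the initial bounds.

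\emph{Main obstacle.} The technical heart, and the step I expect to be hardest, is Step~2: verifying that the cutoff/test-function computation genuinely closes, i.e. that the ``bad'' terms arising from the ambient curvature $K$ and from the $\sigma H^2$ slack can be absorbed by $-\varepsilon|\nabla A|^2$ after the Sobolev inequality, with all constants tracked carefully enough to extract the exponential factor $\mathrm e^{-2\delta K t}$ with $\delta$ depending only on $(n,m,\alpha)$. This requires the sharp form of the divergence identity for $|\nabla A|^2$ (a Simons-type identity on the sphere) and a delicate choice of the auxiliary power $\beta$ so that the reaction term $|A|^2 f_\sigma$ — which is \emph{not} small — is dominated; the role of the pinching condition is precisely to make $|A|^2 \le C H^2 + C K$, converting it into a term that the $H^\beta$ weight and the gradient term can handle, exactly as in the Euclidean cylindrical estimates of Huisken–Sinestrari but with extra curvature bookkeeping.
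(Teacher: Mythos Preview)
Your overall strategy---evolution inequality with a good gradient term, $L^p$ estimates, Stampacchia iteration---is correct and is what the paper does. But two specific ingredients differ substantially from the paper, and your Step~3 is a misstep.

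First, the pinching function. The paper works not with your additive $f_\sigma=|A|^2-(\tfrac{1}{n-m+1}+\sigma)H^2-c_\sigma K$ but with the \emph{quotient}
\[
f_{\sigma,\eta}:=\left[|A|^2-\left(\tfrac{1}{n-m+1}+\eta\right)H^2\right]W^{\sigma-1},\qquad W:=aH^2+bK,
\]
for specific $a,b$ depending on $(n,m,\alpha)$. The point of the $W^{\sigma-1}$ factor is that, in $(\partial_t-\Delta)f_{\sigma,\eta}$, the reaction term $2(|A|^2+nK)$ coming from $f_\eta$ and the $-2(1-\sigma)(|A|^2+nK)$ coming from $W^{\sigma-1}$ combine to leave only $2\sigma(|A|^2+nK)f_{\sigma,\eta}$ with the \emph{small} coefficient $\sigma$. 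Your additive $f_\sigma$ would produce the full $2(|A|^2+nK)f_\sigma$, which cannot be absorbed. (Your $H^\beta$ weight in Step~2 may be an attempt to recover this, but it needs to be made explicit.) The same computation cleanly produces the term $-4\delta K f_{\sigma,\eta}$ responsible for the exponential decay, with $\delta=n\eta_0$ determined purely by algebra in $(n,m,\alpha)$.

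Second, and more seriously: the integral inequality that closes the $L^p$ estimate is \emph{not} Michael--Simon. The paper instead proves a Poincar\'e-type inequality (its Proposition~3.5) of the form
\[
\gamma\int u^2 W\,d\mu\le \int u^2\left(r^{-1}\frac{|\nabla u|^2}{u^2}+r\frac{|\nabla A|^2}{W}+K\right)d\mu
\]
for $u$ supported where $f_{m-1,\eta}\ge 0\ge g_{m,\alpha}$. This is obtained from Simons' identity $\nabla_{(i}\nabla_{j)}A_{kl}-\nabla_{(k}\nabla_{l)}A_{ij}=\mathrm{C}_{ijkl}$ together with a compactness/contradiction argument showing $\gamma W^3\le |\mathrm{C}|^2+1$ on the relevant region. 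This is the missing technical heart: it is precisely what lets the good gradient term $-2\delta f_{\sigma,\eta}|\nabla A|^2/W$ absorb the bad $2\sigma(|A|^2+nK)f_{\sigma,\eta}$ when $\sigma\lesssim p^{-1/2}$. Your proposal mentions a ``Simons-type identity'' only in passing and assigns it the wrong role.

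Finally, your Step~3 is unnecessary. The $\eta>0$ is built into $f_{\sigma,\eta}$ from the outset; there is no passage from ``$\sigma>0$'' to ``$\sigma=0$'' via a strong-maximum-principle splitting argument. The Stampacchia iteration directly bounds $\mathrm e^{2\delta Kt}\max\{f_{\sigma,\eta},0\}$ for each fixed $\eta\in(0,\eta_0)$, and that is already the statement of the theorem.
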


Before proving Theorem \ref{thm:cylindrical estimate}, let us mention some immediate implications. First observe that, when $n\ge 4$ and $m=\lceil \frac{n}{2}\rceil$, we recover Simons' theorem \cite{Si68}: any \emph{minimal} hypersurface of $\mathbb{S}_K^{n+1}$ satisfying $\vert A\vert^2<nK$ is totally geodesic. 

Next observe that any \emph{ancient} solution $X:\M\times(-\infty,0)\to \mathbb{S}^{n+1}_K$ satisfying the uniform quadratic pinching condition \eqref{eq:uniform quadratic pinching} (for suitable $m$ and $\alpha$) and uniform area and curvature bounds as $t\to-\infty$ will satisfy
\[
\vert A\vert^2-\frac{1}{n-m+1}H^2\le 0<2(m-1)
\]
unless $m=1$. In particular, it satisfies the uniform quadratic pinching condition \eqref{eq:uniform quadratic pinching} with $m\mapsto m-1$. Iterating the argument, we find that $X:\M\times(-\infty,0)\to \mathbb{S}^{n+1}_K$ is umbilic, and hence, assuming $\M$ is connected, either a stationary hyperequator or a shrinking hyperparallel (cf.  \cite[Theorem 6.1]{HuSi15}). In fact, the uniform area and curvature bounds are superfluous since we may instead apply Corollary \ref{cor:maximum principle} below. We thus obtain the following parabolic analogue of Simons' theorem, which generalizes \cite[Theorem 6.1 (2)]{HuSi15}.

\begin{theorem}\label{thm:rigidity ancient}
Let $X:\M\times(-\infty,\omega)\to \mathbb S_K^{n+1}$ be an ancient solution to mean curvature flow. If
\begin{itemize}
\item $n=2$ and $\displaystyle \limsup_{t\to-\infty}\max_{\M\times\{t\}}(\vert A\vert^2-\tfrac{3}{4}H^2-\tfrac{4}{3}K)<0$, or
\item $n=3$ and $\displaystyle \limsup_{t\to-\infty}\max_{\M\times\{t\}}(\vert A\vert^2-\tfrac{3}{5}H^2-\tfrac{8}{3}K)<0$, or
\item $n\ge 4$ and $\displaystyle \limsup_{t\to-\infty}\max_{\M\times\{t\}}(\vert A\vert^2-\tfrac{2}{n}H^2-nK)<0$,
\end{itemize}
then $X:\M\times(-\infty,\omega)\to \mathbb S_K^{n+1}$ is either a stationary hyperequator or a shrinking hyperparallel.
\end{theorem}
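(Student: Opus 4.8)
The plan is to reduce Theorem~\ref{thm:rigidity ancient} to the cylindrical estimate of Theorem~\ref{thm:cylindrical estimate} combined with an induction on the cylindrical parameter $m$, exactly as sketched in the paragraph preceding the statement. First I would observe that in each of the three cases the hypothesis is precisely that, for $t$ sufficiently negative, $X(\cdot,t)$ satisfies the strict quadratic pinching \eqref{eq:strict quadratic pinching} with the \emph{largest} admissible value of $m$, namely $m=1$ when $n=2$, $m=2$ when $n=3$, and $m=\lceil\tfrac n2\rceil$ when $n\ge 4$; hence the ancient solution satisfies the uniform quadratic pinching \eqref{eq:uniform quadratic pinching} for that $m$ and some $\alpha\in(0,1)$ on a time interval $(-\infty,t_0)$. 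The first technical point is to upgrade this to a statement on all of $(-\infty,\omega)$: by the preservation Theorem (the Cf.~\cite[1.4 Lemma]{Hu86} statement) applied on $[t_0,t)$ for each $t<\omega$, the pinching persists forward in time, so \eqref{eq:uniform quadratic pinching} in fact holds on all of $\M\times(-\infty,\omega)$.

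Next I would run the descent argument. Fix any $t_1<\omega$ and apply Theorem~\ref{thm:cylindrical estimate} to the solution restricted to $[s,t_1)$ for $s\to-\infty$; after translating time so that $s=0$ we get, for each fixed point and time, a bound of the form $|A|^2-\tfrac1{n-m+1}H^2\le \eta H^2+C_\eta K\mathrm e^{-2\delta K(t-s)}$. Here one must check that the initial data at time $s$ lies in the class $\mathcal C^n_m(K,\alpha,V,\Theta)$ with parameters $V,\Theta$ \emph{independent of $s$}: conditions (1) and (3) follow from the uniform pinching plus the uniform curvature bound that is part of the ``uniform area and curvature bounds as $t\to-\infty$'' hypothesis in the discussion — but, as the excerpt notes, these can be removed by invoking Corollary~\ref{cor:maximum principle} instead, which I would do. Letting $s\to-\infty$ the exponential term vanishes, and then letting $\eta\to 0$ yields $|A|^2-\tfrac1{n-m+1}H^2\le 0$ on $\M\times(-\infty,\omega)$. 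Unless $m=1$, the quantity $2(m-1)K>0$, so this is strictly stronger than the uniform pinching \eqref{eq:uniform quadratic pinching} with $m$ replaced by $m-1$ (one checks $\tfrac1{n-m+1}H^2 \le \tfrac1{n-(m-1)+\alpha}H^2 + 2((m-1)-\alpha)K$ is implied, after possibly enlarging $\alpha$, since $n-m+1>n-m+1-\alpha\ge n-(m-1)+\alpha-1$; more simply, $|A|^2\le \tfrac1{n-m+1}H^2 \le \tfrac1{n-(m-1)+\alpha}H^2+2((m-1)-\alpha)K$ because the right-hand side only got larger). Iterating, after $m-1$ steps we reach $|A|^2\le \tfrac1nH^2$, i.e. the solution is totally umbilic by the equality case of the Cauchy--Schwarz inequality $|A|^2\ge\tfrac1nH^2$.

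Finally I would classify umbilic ancient solutions. If $\M$ is connected and $A=\tfrac Hn g$ everywhere, then by a standard ODE argument (Codazzi forces $H$ constant in space on a connected umbilic hypersurface, and the flow then reduces to the scalar ODE $\pat H$ governed by the evolution of the radius), $X(\cdot,t)$ is at each time a geodesic sphere $\mathbb S^n_{K/\cos^2 r(t)}$ in $\mathbb S^{n+1}_K$ of radius $r(t)$, with $r$ solving the corresponding ODE; the only solutions defined for all $t\to-\infty$ are the stationary one $r\equiv\tfrac\pi2$ (the hyperequator) and the family of shrinking hyperparallels. This last step is where one has to be slightly careful about which ancient umbilic solutions actually exist, but it is a routine phase-plane analysis of a single autonomous ODE. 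The main obstacle, and the only place requiring genuine care, is the interchange of limits $s\to-\infty$ and $\eta\to 0$ together with the $s$-uniformity of the constants $V,\Theta$ (equivalently, the legitimacy of replacing the area/curvature hypotheses by Corollary~\ref{cor:maximum principle}); everything else is bookkeeping and the descent mechanism already spelled out in the text.
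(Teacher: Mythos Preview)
Your approach is exactly the paper's: apply Corollary~\ref{cor:maximum principle} on $[s,t]$, let $s\to-\infty$ to get $|A|^2\le\tfrac{1}{n-m+1}H^2$, iterate $m\mapsto m-1$ down to umbilicity, and classify. Two points need correcting, however. First, once you commit to Corollary~\ref{cor:maximum principle} there is no $\eta$ to send to zero; that parameter belongs to Theorem~\ref{thm:cylindrical estimate}, so that clause is vestigial.

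Second, and more substantively, your check of the iteration step is wrong: you assert
\[
\tfrac{1}{n-m+1}H^2\le\tfrac{1}{n-(m-1)+\alpha}H^2+2((m-1)-\alpha)K
\]
``because the right-hand side only got larger,'' but $\tfrac{1}{n-(m-1)+\alpha}=\tfrac{1}{n-m+1+\alpha}<\tfrac{1}{n-m+1}$, so the $H^2$-coefficient \emph{decreased}, and the added constant cannot compensate when $H$ is large. The same issue lurks in your passage from the $\limsup$ hypothesis to uniform pinching \eqref{eq:uniform quadratic pinching} with a \emph{fixed} $\alpha$ on all of $(-\infty,t_0)$: a uniform strict gap in $|A|^2-cH^2-dK$ does not by itself yield a time-independent $\alpha>0$. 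Both gaps are closed by the elementary observation that $\max_{\M\times\{t\}}H^2$ is nondecreasing in $t$ (immediate from $(\partial_t-\Delta)H^2=2(|A|^2+nK)H^2-2|\nabla H|^2$ and the maximum principle), so $H^2$ is uniformly bounded on $(-\infty,t_0]$ for any fixed $t_0<\omega$; compactness then furnishes a time-independent $\alpha$ at each stage, and the descent goes through as you describe. The paper's discussion is equally informal on this point, but the fix is this one line.
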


Finally, we note that, by a similar argument, any immortal solution $X:\M\times[0,\infty)\to \mathbb S_K^{n+1}$ to mean curvature flow with initial datum satisfying the uniform quadratic pinching condition converges (assuming $\M$ is connected) to a stationary hyperequator. In particular, $\M$ is diffeomorphic to $\mathbb S^n$. 


Returning to the proof of Theorem \eqref{thm:cylindrical estimate}, set
\[
a:=\frac{1}{n-m+\alpha}-\frac{1}{n-m+1}+\eta_0-\eta\;\;\text{and}\;\; b:=2(m-\alpha)\,,
\]
where $\eta_0=\eta_0(n,m,\alpha)$ is the largest number satisfying
\[
n\eta_0\le 
\begin{cases}
\frac{n}{2}+\alpha-m\\
1-\frac{n+2}{3}\left(\frac{1}{n-m+\alpha}+\eta_0\right).
\end{cases}
\]
Note that, by hypothesis, $\eta_0>0$.

We will prove the estimate \eqref{eq:cylindrical estimate}, with $\delta=n\eta_0$, by obtaining a bound for the function
\begin{align*}
f_{\sigma, \eta}:= \left[|A|^2-\left(\frac{1}{n-m+1}+\eta\right)H^2\right]W^{\sigma-1}
\end{align*}
for some $\sigma\in(0,1)$ and any $\eta\in \left(0,\eta_0\right)$, where $W$ is defined by
\[
W:=aH^2+bK\,.
\]
Observe that $W>0$ and $f_{\sigma,\eta}\leq W^\sigma$. The choice of the constants $a$ and $b$ is determined by the need to obtain the good gradient and reaction terms in the following lemma. 

\begin{lemma}\label{lem:evol_fsigma}
There exists $\delta=\delta(n,\alpha)>0$ such that
\begin{align}\label{eq:ineqn_evol}
(\partial_t-\Delta)f_{\sigma,\eta}\leq{}&2\sigma(|A|^2+nK)f_{\sigma,\eta}-4\delta Kf_{\sigma,\eta}\nonumber\\
{}&-2\delta f_{\sigma,\eta}\frac{|\nabla A|^2}{W}+2(1-\sigma)\left\langle \nabla f_{\sigma,\eta}, \frac{\nabla W}{W}\right\rangle
\end{align}
wherever $f_{\sigma,\eta}>0$.
\end{lemma}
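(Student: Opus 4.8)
The plan is to compute $(\partial_t - \Delta) f_{\sigma,\eta}$ directly from the evolution equations for $|A|^2$, $H^2$, and $W$, using the quotient/product rule for the parabolic operator. Writing $g := |A|^2 - \left(\frac{1}{n-m+1}+\eta\right)H^2$ so that $f_{\sigma,\eta} = g\, W^{\sigma-1}$, the first step is to recall (as in the proof of the first theorem, citing \cite[Equations (2.11) and (2.12)]{LangfordNguyen}) the evolution equations
\[
(\partial_t - \Delta)|A|^2 = -2|\nabla A|^2 + 2|A|^2(|A|^2 + nK) - 4nK\big(|A|^2 - \tfrac1n H^2\big) + \text{(terms from $\mathbb S_K$)}
\]
and similarly for $H^2$, and to deduce the evolution of $W = aH^2 + bK$. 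Then I would apply the identity
\[
(\partial_t - \Delta)(g W^{\sigma-1}) = W^{\sigma-1}(\partial_t - \Delta)g + g(\partial_t-\Delta)W^{\sigma-1} - 2\langle \nabla g, \nabla W^{\sigma-1}\rangle,
\]
and expand $(\partial_t-\Delta)W^{\sigma-1} = (\sigma-1)W^{\sigma-2}(\partial_t-\Delta)W - (\sigma-1)(\sigma-2)W^{\sigma-3}|\nabla W|^2$. The cross term and the $|\nabla W|^2$ term should be recombined, after expressing $\nabla g = W^{1-\sigma}\nabla f_{\sigma,\eta} + (1-\sigma) g W^{-1}\nabla W$, into the stated term $2(1-\sigma)\langle \nabla f_{\sigma,\eta}, \nabla W/W\rangle$ plus a leftover gradient term proportional to $f_{\sigma,\eta}|\nabla W|^2/W^2$ with a favorable sign (since $\sigma < 1$), which can be discarded.

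The heart of the argument is the gradient-term bookkeeping. After the recombination, the bad gradient contributions are $-2 W^{\sigma-1}|\nabla A|^2$ (from $(\partial_t-\Delta)|A|^2$), a positive multiple of $W^{\sigma-1}|\nabla H|^2$ (from the $-a_{m}$-type term in $g$ and from $(\partial_t-\Delta)W$), and the leftover $|\nabla W|^2$ term. Using $\nabla W = 2aH\,\nabla H$, the $|\nabla W|^2$ term is itself a multiple of $H^2|\nabla H|^2 / W^2 \le |\nabla H|^2 / a$ times something controlled, so all gradient terms reduce to a combination of $|\nabla A|^2$ and $|\nabla H|^2$. The choice of $\eta_0$ — specifically the constraint $n\eta_0 \le 1 - \frac{n+2}{3}\big(\frac{1}{n-m+\alpha}+\eta_0\big)$ — is exactly engineered so that, after invoking the Kato-type inequality $|\nabla A|^2 \ge \frac{3}{n+2}|\nabla H|^2$ (which gives room to absorb the $|\nabla H|^2$ terms), one retains a definite negative multiple $-2\delta\, W^{\sigma-1}|\nabla A|^2 = -2\delta f_{\sigma,\eta}|\nabla A|^2/W \cdot (g^{-1}W)\cdots$ — more precisely $-2\delta W^{\sigma-1}|\nabla A|^2$, which is rewritten as $-2\delta f_{\sigma,\eta}\frac{|\nabla A|^2}{W}$ after using $f_{\sigma,\eta} = gW^{\sigma-1}$ and $g \le W$ (so $g W^{\sigma-1} \le W^\sigma$, i.e. $W^{\sigma-1} \ge f_{\sigma,\eta}/W$). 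I would carry out the $|\nabla H|^2$ absorption carefully here, as this is where the precise constants matter.

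For the reaction terms: after collecting, $(\partial_t - \Delta)f_{\sigma,\eta}$ has a zeroth-order part that factors through $g$ and hence through $f_{\sigma,\eta}$. The leading term $2\sigma(|A|^2+nK)f_{\sigma,\eta}$ comes from $2|A|^2 g$ (the reaction in $|A|^2$) combined with the $(\sigma-1)$-weighted reaction of $W$: schematically, $2|A|^2 g W^{\sigma-1} + (\sigma-1) g W^{\sigma-2}(\partial_t-\Delta)W$, and using $(\partial_t-\Delta)W = 2aH^2(|A|^2+nK) - 4anK(|A|^2-\tfrac1nH^2) + \cdots \approx 2(|A|^2+nK)W - (\text{good})$, the $W$-reaction cancels most of the $|A|^2$-reaction leaving the coefficient $2\sigma$. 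The remaining curvature-dependent pieces — the term $-4nK(|A|^2 - \frac1n H^2)$ in the $|A|^2$ evolution (which is $\ge 0$ as a multiple of $-g$-type quantity only after massaging), the analogous term in $W$, and the ambient curvature terms $2bK(|A|^2+nK)$-style contributions — must be shown to combine into $-4\delta K f_{\sigma,\eta}$ with the same $\delta = n\eta_0$. The constraint $n\eta_0 \le \frac n2 + \alpha - m$ is what makes the curvature reaction have the right sign (this is the same inequality, via $b = 2(m-\alpha)$, as $2n - b \ge 2n\eta_0 \cdot(\text{something})$ ensuring the coefficient in front of the pinching deficit is controlled). The main obstacle I anticipate is keeping the two independent constraints defining $\eta_0$ cleanly separated — one governs the gradient terms (through Kato) and one governs the reaction/curvature terms — and verifying that a single $\delta$ works for both; this is bookkeeping-heavy but follows the template of \cite{Hu84,Hu87} and \cite{LangfordNguyen}, with the sphere contributing the extra $K$-terms that ultimately supply the $\mathrm{e}^{-2\delta Kt}$ decay.
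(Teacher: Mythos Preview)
Your proposal follows the same approach as the paper: apply the quotient rule, discard the favorable $-\sigma(1-\sigma)f_{\sigma,\eta}|\nabla W|^2/W^2$ term, use the Kato inequality together with $f_{\sigma,\eta}\le W^\sigma$ to handle the gradient terms (this is where the second branch of the $\eta_0$ definition enters), and then estimate the remaining reaction terms directly (using the first branch). The paper makes the reaction step precise by isolating
\[
Z:=(m-\alpha)(|A|^2+nK)\frac{f_\eta}{W}+H^2-n|A|^2
\]
and proving $Z\le -n\eta_0 f_\eta$ via the algebraic identity $\frac{m-\alpha}{n-m+\alpha}=na+\frac{n}{n-m+1}+n(\eta-\eta_0)-1$ together with $f_\eta\le W$; your sketch is correct but vague exactly here, and note that there is no ``analogous $-4nK(|A|^2-\tfrac1nH^2)$ term'' in the evolution of $W$---that term comes solely from the evolution of $|A|^2$.
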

\begin{proof}
Set $f_\eta:=|A|^2-(\tfrac{1}{n-m+1}+\eta)H^2$. Basic manipulations (independent of the precise form of $f_\eta$ and $W$) yield
\begin{align*}
(\partial_t-\Delta)f_{\sigma,\eta}={}&W^{\sigma-1}(\partial_t-\Delta)f_\eta-(1-\sigma)f_{\sigma,\eta}W^{-1}(\partial_t-\Delta)W\\
{}&+2(1-\sigma)\left\langle\nabla f_{\sigma,\eta},\frac{\nabla W}{W}\right\rangle-\sigma(1-\sigma)f_{\sigma,\eta}\frac{|\nabla W|^2}{W^2}\,.
\end{align*}
The final term will be discarded.

Applying the evolution equations for $H^2$ and $\vert A\vert^2$ \cite[Equations (2.11) and (2.12)]{LangfordNguyen}, we compute
\begin{align*}
(\partial_t-\Delta)W=2\left(|A|^2+nK\right)(W-bK)-2a|\nabla H|^2
\end{align*}
and
\begin{align}\label{eq:evolve_f_eta}
(\partial_t-\Delta)f_\eta={}&2\left(|A|^2+nK\right)f_\eta-4nK\left(|A|^2-\tfrac{1}{n}H^2\right)\nonumber\\
{}&-2\left(|\nabla A|^2-\left(\tfrac{1}{n-m+1}+\eta\right)|\nabla H|^2\right)\,.
\end{align}

Combining these preceding three identities and estimating $f_{\sigma,\eta}\leq W^{\sigma}$, $|\nabla H|^2\leq \frac{n+2}{3}|\nabla A|^2$ and $\sigma<1$ yields
\begin{align*}
(\partial_t-\Delta)f_{\sigma,\eta}\leq{}&2\sigma(|A|^2+nK)f_{\sigma,\eta}+2(1-\sigma)\left\langle\nabla f_{\sigma,\eta},\frac{\nabla W}{W}\right\rangle\\
{}&+2W^{\sigma-1}\left(bK(|A|^2+nK)\frac{f_\eta}{W}-2nK(|A|^2-\tfrac{1}{n}H^2)\right)\\
{}&-2f_{\sigma,\eta}\left(1-\frac{n+2}{3}\left[\frac{1}{n-m+\alpha}+\eta_0\right]\right)\frac{|\nabla A|^2}{W}\,.
\end{align*}
By the definition of $\eta_0$,
\[
2f_{\sigma,\eta}\left(1-\frac{n+2}{3}\left[\frac{1}{n-m+\alpha}+\eta_0\right]\right)\frac{|\nabla A|^2}{W}\ge 2n\eta_0f_{\sigma,\eta}\frac{|\nabla A|^2}{W}\,.
\]
%
So consider the term
\begin{align*}
Z:={}&(m-\alpha)\left(|A|^2+nK\right)\frac{f_\eta}{W}+H^2-n|A|^2\\
\leq{}&(m-\alpha)\left(\frac{1}{n-m+\alpha}H^2+(b+n)K\right)\frac{f_\eta}{W}+H^2-n|A|^2\,.
\end{align*}
Noting that
\[
\frac{m-\alpha}{n-m+\alpha}=na+\frac{n}{n-m+1}+n(\eta-\eta_0)-1\;\;\text{and}\;\;m-\alpha=\frac{b}{2},
\]
and estimating $f_\eta\le W$, we find
\begin{align*}
Z\leq{}&nf_\eta+H^2-n|A|^2\\
{}&+\left(\left[\frac{n}{n-m+1}+n(\eta-\eta_0)-1\right]H^2+\frac{b}{2}(b+n)K-nbK\right)\frac{f_\eta}{W}\\
={}&-\left(\frac{n}{n-m+1}+n\eta-1\right)H^2\\
{}&+\left(\left[\frac{n}{n-m+1}+n(\eta-\eta_0)-1\right]H^2+\left[m-\alpha-\frac{n}{2}\right]bK\right)\frac{f_\eta}{W}\\
\le{}&-\left(n\eta_0H^2+\left[\frac{n}{2}+\alpha-m\right]bK\right)\frac{f_\eta}{W}\\
\le{}&-n\eta_0f_\eta\,.
\end{align*}
The claim follows.
\end{proof}

Setting $\sigma=\eta=0$, the maximum principle yields the following.
\begin{corollary}\label{cor:maximum principle}
Every solution $X:\M\times[0,T)\to\mathbb S_K^{n+1}$ to mean curvature flow with initial datum satisfying the uniform quadratic pinching condition \eqref{eq:uniform quadratic pinching} with $\alpha>m-\min\{\frac{n}{2},\frac{2(n-1)}{3}\}$ satisfies
\[
\frac{\vert A\vert^2-\frac{1}{n-m+1}H^2}{aH^2+bK}\le \mathrm{e}^{-4\delta Kt}\,,
\]
where $\delta=\delta(n,m,\alpha)$.
\end{corollary}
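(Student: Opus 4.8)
The plan is to specialize Lemma~\ref{lem:evol_fsigma} to $\sigma=\eta=0$ and feed the resulting differential inequality into the parabolic maximum principle. With $\sigma=\eta=0$ the function of that lemma is
\[
f_{0,0}=\frac{|A|^2-\frac{1}{n-m+1}H^2}{W}\,,\qquad W=aH^2+bK\,,
\]
where (at $\eta=0$) $a=\frac{1}{n-m+\alpha}-\frac{1}{n-m+1}+\eta_0$ and $b=2(m-\alpha)$; here $\eta_0>0$ by the hypothesis $\alpha>m-\min\{\tfrac n2,\tfrac{2(n-1)}{3}\}$, while $a>0$ (since $\alpha<1$) and $b>0$ (since $m\ge 1>\alpha$), so $W>0$. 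Lemma~\ref{lem:evol_fsigma}, with $\delta=n\eta_0>0$, then gives, wherever $f_{0,0}>0$,
\[
(\partial_t-\Delta)f_{0,0}\le-4\delta Kf_{0,0}-2\delta f_{0,0}\frac{|\nabla A|^2}{W}+2\Big\langle\nabla f_{0,0},\frac{\nabla W}{W}\Big\rangle\,,
\]
and the middle term is nonpositive there, so it may be discarded.

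Next I would absorb the zeroth-order term by passing to $g:=\mathrm{e}^{4\delta Kt}f_{0,0}$. Since $K$ is a constant, this yields $(\partial_t-\Delta)g\le 2\langle\nabla g,\nabla W/W\rangle$ wherever $g>0$; that is, on the open set $\{g>0\}$, $g$ is a subsolution of a linear parabolic equation with smooth drift $2\nabla W/W$ and no zeroth-order term. The initial datum is controlled: at $t=0$, \eqref{eq:uniform quadratic pinching} reads $|A|^2\le\frac{1}{n-m+\alpha}H^2+2(m-\alpha)K$, so, using $\eta_0>0$ and $H^2\ge 0$, $|A|^2-\frac{1}{n-m+1}H^2\le aH^2+bK$; hence $g(\cdot,0)=f_{0,0}(\cdot,0)\le 1$.

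It then remains to invoke the maximum principle to conclude $g\le 1$ on $\M\times[0,T)$, which is exactly the claimed inequality. The only point needing care is that the differential inequality for $g$ holds only on $\{g>0\}$, but this is handled in the usual way: on $\{g\le 0\}$ the asserted estimate $f_{0,0}\le\mathrm{e}^{-4\delta Kt}$ is trivial, while on $\{g>0\}$ one runs a standard barrier argument — fix $T'<T$ and $\varepsilon>0$, set $g_\varepsilon:=g-\varepsilon(1+t)$, note $g_\varepsilon(\cdot,0)<1$, and observe that at a hypothetical first point $(p_0,t_0)$ with $t_0>0$ at which $g_\varepsilon$ attains the value $1$ one would have $g(p_0,t_0)>1>0$, $\nabla g=0$, $\Delta g\le 0$ and $\partial_t g\ge\varepsilon$, contradicting $(\partial_t-\Delta)g\le 2\langle\nabla g,\nabla W/W\rangle=0$ at that point; letting $\varepsilon\to 0$ and then $T'\to T$ gives $g\le 1$. (Compactness of $\M$, which follows from properness into the compact ambient sphere, ensures the relevant maxima are attained.) I do not expect any genuine obstacle: the corollary is essentially Lemma~\ref{lem:evol_fsigma} read at $\sigma=\eta=0$, and the only subtlety — the behaviour of $f_{0,0}$ across its zero set in the maximum-principle step — is routine.
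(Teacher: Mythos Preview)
Your proposal is correct and is exactly the paper's approach: the authors' entire proof is the sentence ``Setting $\sigma=\eta=0$, the maximum principle yields the following,'' and you have simply filled in the routine details (the initial bound $f_{0,0}(\cdot,0)\le 1$ from \eqref{eq:uniform quadratic pinching} and the standard barrier handling of the restriction to $\{f_{0,0}>0\}$).
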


We wish to bound $\mathrm{e}^{2\delta Kt}f_{\sigma,\eta}$ from above. It will suffice to consider points where $f_{\sigma, \eta}>0$. To that end, consider the function
\[
f_+
:=\max\{\mathrm{e}^{2\delta Kt}f_{\sigma,\eta},0 \}\,.
\]

We first derive an $L^p$-estimate for $f_+$ using Lemma \ref{lem:evol_fsigma} and the following proposition.

\begin{proposition}\label{prop:Poincare}
Given $n\geq 3$, $\alpha\in(0,1)$ and $\eta\in(0,\frac{1}{n-m+\alpha}-\frac{1}{n-m+1})$ there exists $\gamma=\gamma(n,\alpha,\eta)>0$ with the following property: Let $X:\mc M^n\to \mbb S^{n+1}_K$ be a smoothly immersed hypersurface and $u\in W^{2,2}(\mc M)$ a function satisfying $\operatorname{spt}u\subset U_{\alpha,\eta}$, where, introducing the functions 
\[
f_{m,\eta}:=|A|^2-\frac{1}{n-m}H^2-\eta H^2
\]
and
\[
g_{m,\alpha}:=|A|^2-\frac{1}{n-m+\alpha}H^2-2(m-\alpha)K\,,
\]
the set $U_{\alpha,\eta}\subset \mathcal{M}$ is defined by
\[
U_{\alpha,\eta}:=\{x\in \mathcal{M}:f_{m-1,\eta}(x)\geq 0\geq g_{m,\alpha}(x)\}\,.
\]
For any $r\geq 1$,
\[
\gamma\int u^2W\,d\mu\leq \int u^2\left(r^{-1}\frac{|\nabla u|^2}{u^2}+r\frac{|\nabla A|^2}{W}+K\right)d\mu\,.
\]
\end{proposition}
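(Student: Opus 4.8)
The plan is to prove the estimate by contradiction, reducing it to a rigidity statement for hypersurfaces with parallel second fundamental form.

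\emph{Reductions.} Since $W=aH^2+bK$ with $a,b>0$ and the right‑hand side already contains $\int u^2K$, it suffices to find $\gamma=\gamma(n,\alpha,\eta)>0$ with
\[
\gamma\int u^2H^2\,d\mu\le r^{-1}\int|\nabla u|^2\,d\mu+r\int u^2\frac{|\nabla A|^2}{W}\,d\mu+\int u^2K\,d\mu;
\]
the stated inequality then follows after shrinking $\gamma$ to absorb $b\int u^2K$ and the implied constants. All four terms scale the same way under $X\mapsto\lambda X$, $K\mapsto\lambda^{-2}K$, so the inequality is scale invariant and we may rescale at will. Also, on $\{H^2\le\Lambda K\}\cap\operatorname{spt}u$ one has $W\le(a\Lambda+b)K$, so $\int_{\{H^2\le\Lambda K\}}u^2W\,d\mu\le C_\Lambda\int u^2K\,d\mu$ for any fixed $\Lambda$; the content of the estimate therefore lies where $H^2$ is large relative to $K$.

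\emph{Blow‑up.} Suppose no such $\gamma$ exists. Then there are smooth immersions $X_j\colon\mc M_j^n\to\s^{n+1}_{1}$ (normalising $K_j=1$ by scaling), functions $u_j$ with $\operatorname{spt}u_j\subset U_{\alpha,\eta}$ and $\int u_j^2W_j\,d\mu_j=1$, and $r_j\ge1$, with $r_j^{-1}\int|\nabla u_j|^2+r_j\int u_j^2|\nabla A_j|^2/W_j+\int u_j^2<\gamma_j\to0$. As $r_j\ge1$, both $\int u_j^2|\nabla A_j|^2/W_j\to0$ and $\int u_j^2\to0$, so from $W_j=aH_j^2+b$ and $\int u_j^2W_j=1$ we get $\int u_j^2H_j^2\to a^{-1}$; thus the $u_j$‑mass concentrates where $H_j$ is large. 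Choose points $p_j$ realising this concentration and blow up by $\sim H_j(p_j)\to\infty$, so the ambient spheres flatten. On $\operatorname{spt}u_j$ the bound $g_{m,\alpha}\le0$ forces $|A_j|^2\le CW_j$, and Codazzi together with the Kato inequality forces $|\nabla\sqrt{W_j}|\le C|\nabla A_j|$; combining these with the $L^2$‑smallness of $\nabla A_j$ near $p_j$ yields $|A_j|\le C$ and $W_j\sim1$ on a fixed intrinsic ball about $p_j$ after rescaling. Passing to a subsequence, the rescaled immersions converge in $C^{1,\beta}_{\mathrm{loc}}$ to a hypersurface $X_\infty\colon\mc M_\infty\to\R^{n+1}$ with $A_j\to A_\infty$ and $\nabla A_\infty\equiv0$ (since $\nabla A_j\to0$ in $L^2_{\mathrm{loc}}$), and the limiting weight $u_\infty$ is nontrivial with $H_\infty\not\equiv0$ on $\operatorname{spt}u_\infty$. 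By the classification of hypersurfaces with parallel second fundamental form, $X_\infty(\mc M_\infty)$ is locally a cylinder $\s^{j}(\rho)\times\R^{n-j}$ for an integer $1\le j\le n$, so $|A_\infty|^2=\tfrac1jH_\infty^2$ on $\operatorname{spt}u_\infty$; but the inclusion $\operatorname{spt}u_\infty\subset\overline{U_{\alpha,\eta}}$, in the limit $K\to0$, forces $\tfrac1j\in\bigl[\tfrac{1}{n-m+1}+\eta,\tfrac{1}{n-m+\alpha}\bigr]\subset\bigl(\tfrac{1}{n-m+1},\tfrac{1}{n-m}\bigr)$, which is impossible because no integer lies strictly between $n-m$ and $n-m+1$. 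This contradiction proves the proposition; as only $n,\alpha,\eta$ entered, $\gamma=\gamma(n,\alpha,\eta)$. The hypotheses $\alpha\in(0,1)$ and $\eta<\tfrac{1}{n-m+\alpha}-\tfrac{1}{n-m+1}$ are used precisely to keep $1/j$ out of the relevant window.

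\emph{The main obstacle} is the analytic input: a concentration‑compactness step is needed to select the $p_j$ and to ensure $u_\infty\not\equiv0$ with $H_\infty\not\equiv0$ on its support; the free parameter $r_j$ must be handled, since if $r_j\to\infty$ then $\int|\nabla u_j|^2$ need not vanish and the blow‑up scale must also resolve the oscillation of $u_j$ (or one shows $r_j$ stays bounded); and one must verify $A_j\to A_\infty$ strongly enough that $A_\infty$ is the second fundamental form of $X_\infty$ with vanishing covariant derivative. It is worth noting that the naive local approach — integrating the Simons identity for $|A|^2$ against $u^2/W$ and integrating by parts — reduces the estimate to the pointwise bound $H\tr A^3-|A|^4\gtrsim W^2-CKW$ on $U_{\alpha,\eta}$, which is false for $m\ge2$ (the reaction term $H\tr A^3-|A|^4=\sum_{i<j}\lambda_i\lambda_j(\lambda_i-\lambda_j)^2$ changes sign near the cylindrical configurations), so some form of the rigidity argument above is essentially unavoidable.
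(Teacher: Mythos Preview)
Your rigidity observation is exactly right and is the heart of the paper's proof as well: the only ``cylindrical'' configurations compatible with the pinching would require an integer strictly between $n-m$ and $n-m+1$. But the paper implements this at a purely \emph{pointwise algebraic} level, not via a geometric blow-up, and this sidesteps every analytic obstacle you list.

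Concretely, the paper uses the \emph{tensorial} Simons identity
\[
\nabla_{(i}\nabla_{j)}A_{kl}-\nabla_{(k}\nabla_{l)}A_{ij}=\mathrm C_{ijkl},\qquad \mathrm C=A\otimes A^2-A^2\otimes A+(g\otimes A-A\otimes g),
\]
and first proves the pointwise bound $\gamma W^3\le |\mathrm C|^2+K^3$ on $U_{\alpha,\eta}$. The contradiction argument for this is carried out on sequences of vectors $\vec\lambda^k\in\R^n$ of principal curvatures: rescaling by $W^{-1/2}$ and passing to a limit $\hat\lambda$, one has $|\mathrm C(\hat\lambda)|^2=\sum_{i,j}(\hat\lambda_i\hat\lambda_j)^2(\hat\lambda_i-\hat\lambda_j)^2=0$, which forces $\hat\lambda$ to have one nonzero value of multiplicity $n-\ell$ and zero of multiplicity $\ell$; the limiting pinching then traps $\ell$ in $(m-1,m-\alpha]$. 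This is your rigidity step, but no hypersurface convergence, no concentration--compactness, and no control of $u_j$ is needed---it is finite-dimensional compactness on $\R^n$. Once the pointwise bound is in hand, one writes $\gamma\int u^2W\le\int \tfrac{u^2}{W^2}(|\mathrm C|^2+1)=\int\tfrac{u^2}{W^2}(\mathrm C\ast\nabla^2A+1)$, integrates by parts, and closes with Young's inequality.

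Your final paragraph misidentifies why the ``local'' approach should fail. You tested the \emph{scalar} Simons identity and were led to $H\tr A^3-|A|^4$, which indeed changes sign. The paper instead uses $|\mathrm C|^2$, which at $K=1$ equals $\sum_{i,j}(\lambda_i-\lambda_j)^2(\lambda_i\lambda_j+1)^2\ge 0$ and vanishes precisely on the forbidden cylindrical configurations; the needed lower bound $|\mathrm C|^2+1\gtrsim W^3$ \emph{does} hold on $U_{\alpha,\eta}$. So the integration-by-parts route is not only viable, it is the clean one.

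As it stands, your blow-up argument has genuine gaps that you yourself flag but do not resolve: you have no control of the immersions $X_j$ off $\operatorname{spt}u_j$, only weighted $L^2$ smallness of $\nabla A_j$ (not enough to extract $C^{1,\beta}_{\mathrm{loc}}$ limits of hypersurfaces), no mechanism guaranteeing $u_\infty\not\equiv 0$ with $H_\infty\ne 0$ on its support, and no argument handling $r_j\to\infty$. Replacing the geometric blow-up by the algebraic one on $\R^n$ eliminates all of these issues at once.
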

\begin{proof}
We proceed as in \cite[Proposition 2.2]{LangfordNguyen}. By a straightforward scaling argument, it suffices to prove the claim when $K=1$. Recall Simons' identity
\[
\nabla_{(i}\nabla_{j)}A_{kl}-\nabla_{(k}\nabla_{l)}A_{ij}=\mathrm{C}_{ijkl}\,,
\]
where the brackets denote symmetrization and
\[
\mathrm{C}:=A\otimes A^2-A^2\otimes A+(g\otimes A-A\otimes g)\,.
\]
We claim that
\begin{equation}\label{eq:Poincare1}
\gamma(n,\alpha,\eta) W^3\leq |\mathrm{C}|^2+1\quad\text{in}\quad U_{\alpha,\eta}
\end{equation}
on any immersed hypersurface $X:\mathcal{M}^n\to \mathbb{S}^n$. Indeed, if this is not the case then there is a sequence of vectors $\vec\lambda^k\in \R^n$, $k\in\mathbb{N}$, satisfying
\[
f_{m-1,\eta}(\vec\lambda^k):=|\vec\lambda^k|^2-\frac{1}{n-m+1}\tr(\vec\lambda^k)^2-\eta \tr(\vec\lambda^k)^2\geq 0
\]
and
\[
g_{m,k}(\vec\lambda^k):=|\vec\lambda^k|^2-\frac{1}{n-m+\alpha}\tr(\vec\lambda^k)^2-2(m-\alpha)\leq 0\,,
\]
where $\tr(\vec\lambda):=\sum_{i=1}^n\lambda_i$, but
\[
\frac{|\mathrm{C}(\vec\lambda^k)|^2+1}{W^3(\vec\lambda^k)}\to 0
\]
as $k\to\infty$, where
\[
|\mathrm{C}(\vec\lambda)|^2:=\sum_{i,j=1}^n(\lambda_j-\lambda_i)^2(\lambda_i\lambda_j+1)^2
\]
and
\[
W(\vec\lambda):=\left(\frac{1}{n-m+\alpha}-\frac{1}{n-m+1}+\eta_0-\eta\right)\tr(\vec\lambda)^2+2(2-\alpha)\,.
\]
Set $r^2_{k}:=W(\vec\lambda^k)^{-1}\to 0$ and $\hat\lambda^k:=r_k\vec\lambda^k$. Observe that
\[
\vert \hat\lambda^k\vert\leq c(n,\alpha)
\]
and hence, up to a subsequence, $\hat\lambda^k\to \hat\lambda\in \R^n$. Computing
\[
\left(|\hat \lambda^k|^2-\frac{1}{n-m+1}\tr(\hat \lambda^k)^2\right)-\eta \tr(\hat\lambda^k)^2=r^2_k f_{m-1,\eta}(\vec\lambda^k)\geq0
\]
and
\[
\left(|\hat \lambda^k|^2-\frac{1}{n-m+\alpha}\tr(\hat \lambda^k)^2\right)-2(m-\alpha)r_k^2=r^2_k g_{m,\alpha}(\vec\lambda^k)\leq 0\,,
\]
we find
\begin{equation}\label{eq:Poincarecontra2}
\left(|\hat \lambda|^2-\frac{1}{n-m+1}\tr(\hat \lambda)^2\right)\geq \eta \tr(\hat \lambda)^2
\end{equation}
and
\begin{equation}\label{eq:Poincarecontra3}
\left(|\hat \lambda|^2-\frac{1}{n-m+\alpha}\tr(\hat \lambda)^2\right)\leq 0\,.
\end{equation}
On the other hand,
\[
\sum_{i,j=1}^n\left(\hat\lambda^k_i\hat\lambda^k_j(\hat\lambda^k_j-\hat\lambda^k_i)\right)^2 +2r_k^2\hat\lambda^k_i\hat\lambda^k_j(\hat\lambda^k_j-\hat\lambda^k_i)^2 +r_k^4(\hat\lambda^k_j-\hat\lambda^k_i)^2 = r_k^6|\mathrm{C}(\vec\lambda^k)|^2
\]
so that
\begin{equation}\label{eq:Poincarecontra4}
\sum_{i,j=1}^n\left(\hat\lambda_i\hat\lambda_j(\hat\lambda_j-\hat\lambda_i)\right)^2=0\,.
\end{equation}
Together, 
\eqref{eq:Poincarecontra2}, \eqref{eq:Poincarecontra3} and \eqref{eq:Poincarecontra4} are in contradiction: \eqref{eq:Poincarecontra4} implies that $\hat\lambda$ has a null component of multiplicity $\ell$ and a non-zero component, $\kappa$ say, of multiplicity $n-\ell$. 
The inequalities \eqref{eq:Poincarecontra2} and \eqref{eq:Poincarecontra3} then yield
\[
\left(n-\ell-\frac{(n-\ell)^2}{n-m+1}\right)\kappa^2\geq \eta(n-\ell)^2\kappa^2>0
\]
and
\[
\left(n-\ell-\frac{(n-\ell)^2}{n-m+\alpha}\right)\kappa^2\leq 0\,.
\]
which together imply that $\ell\in(m-1,m-\alpha]$, which is impossible. This proves \eqref{eq:Poincare1}. 

Using \eqref{eq:Poincare1}, we can estimate
\begin{align*}
\gamma\!\int\! u^2Wd\mu\leq{}& \int  \frac{u^2}{W^{2}}\left(|\mathrm{C}|^2+1\right)d\mu\\
={}& \int \frac{u^2}{W^{2}}\left(\mathrm{C}\ast\nabla^2A+1\right)\,d\mu\\
={}&\int\!\frac{u^2}{W^{2}}\!\left(\frac{\nabla u}{u}\ast\mathrm{C}+\frac{\nabla W}{W}\ast\mathrm{C}+\nabla\mathrm{C}\right)\!\ast\nabla A\,d\mu+\int  \frac{u^2}{W^{2}}\,d\mu\\
\leq{}&C\left[\int\frac{u^2}{W^{2}}\left(W^{\frac{3}{2}}\frac{|\nabla u|}{u}+W^{\frac{1}{2}}|\nabla W|+W|\nabla A|\right)|\nabla A|\,d\mu\right.\\
{}&\qquad\left.+\int u^2\,d\mu\right]\\
\leq{}&C\left[\int u^2\left(\frac{|\nabla u|}{u}+\frac{|\nabla A|}{W^{\frac{1}{2}}}\right)\frac{|\nabla A|}{W^{\frac{1}{2}}}\,d\mu+\int u^2\,d\mu\right],
\end{align*}
where $C$ denotes any constant which depends only on $n$, $\alpha$ and $\eta$. The claim now follows from Young's inequality.
\end{proof}

\begin{lemma}
There exist constants $\ell=\ell(n,\alpha,\eta)<\infty$ and $C=C(n,K,\alpha,V,\Theta,\sigma,p)$ such that
\begin{align}\label{eq:Lpest}
\int f_+^p\,d\mu\leq C\vts\mathrm{e}^{-\delta pKt}
\end{align}
so long as $\sigma\leq \ell p^{-\frac{1}{2}}$ and $p>\ell^{-1}$.
\end{lemma}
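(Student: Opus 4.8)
The plan is to run a Stampacchia-type estimate built on the evolution inequality of Lemma~\ref{lem:evol_fsigma}, the Poincaré inequality of Proposition~\ref{prop:Poincare}, and the three hypotheses defining $\mathcal C_m^n(K,\alpha,V,\Theta)$. First I would record the evolution inequality for $f_+$ itself: on the open set $\{f_+>0\}=\{f_{\sigma,\eta}>0\}$ one has $f_+=\mathrm e^{2\delta Kt}f_{\sigma,\eta}$, so multiplying \eqref{eq:ineqn_evol} by $\mathrm e^{2\delta Kt}$ and using $\partial_t f_+=2\delta Kf_++\mathrm e^{2\delta Kt}\partial_tf_{\sigma,\eta}$ gives, wherever $f_+>0$,
\[
(\partial_t-\Delta)f_+\le 2\sigma(|A|^2+nK)f_+-2\delta Kf_+-2\delta f_+\frac{|\nabla A|^2}{W}+2(1-\sigma)\Big\langle\nabla f_+,\frac{\nabla W}{W}\Big\rangle.
\]
Testing this against $f_+^{p-1}$ (which is Lipschitz and vanishes on $\{f_+=0\}$, so the computation is justified across $\partial\{f_+>0\}$ by the standard approximation argument, and $\int f_+^p\,d\mu<\infty$ is differentiable in $t$ by the properness and area hypotheses, as in \cite{LangfordNguyen,Hu84}) and using $\partial_t\,d\mu=-H^2\,d\mu$, one obtains
\[
\frac{d}{dt}\!\int\! f_+^p\,d\mu\le 2p\sigma\!\int\!(|A|^2+nK)f_+^p-2p\delta K\!\int\! f_+^p-2p\delta\!\int\! f_+^p\tfrac{|\nabla A|^2}{W}+2p(1-\sigma)\!\int\! f_+^{p-1}\langle\nabla f_+,\tfrac{\nabla W}{W}\rangle-p(p-1)\!\int\! f_+^{p-2}|\nabla f_+|^2,
\]
where I have discarded the harmless term $-\int f_+^pH^2\le0$ coming from the measure.

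Next I would dispose of the gradient cross-term and the reaction term. Since $W\ge aH^2$ and $|\nabla H|^2\le\tfrac{n+2}{3}|\nabla A|^2$, we have $|\nabla W|^2=4a^2H^2|\nabla H|^2\le C_1W|\nabla A|^2$ with $C_1=C_1(n,m,\alpha)$, hence $\tfrac{|\nabla W|}{W}\le C_1^{1/2}W^{-1/2}|\nabla A|$; Young's inequality then gives, for $p$ larger than some $p_0(n,m,\alpha,\eta)$,
\[
2p(1-\sigma)\!\int\! f_+^{p-1}\langle\nabla f_+,\tfrac{\nabla W}{W}\rangle\le\tfrac{p(p-1)}{4}\!\int\! f_+^{p-2}|\nabla f_+|^2+p\delta\!\int\! f_+^p\tfrac{|\nabla A|^2}{W}.
\]
For the reaction term I would apply Proposition~\ref{prop:Poincare} with $u=f_+^{p/2}$ and a parameter $r\ge1$ to be chosen. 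This is legitimate because at each time $\operatorname{spt}u\subset U_{\alpha,\eta}$: on $\{f_+>0\}$ the function $f_{m-1,\eta}=f_\eta$ is positive, while $g_{m,\alpha}\le0$ everywhere since \eqref{eq:uniform quadratic pinching} is preserved along the flow (using condition (1) of the class). Combining Proposition~\ref{prop:Poincare} with the pointwise estimate $|A|^2+nK\le C_2W$ ($C_2=C_2(n,m,\alpha)$, a consequence of \eqref{eq:uniform quadratic pinching} together with $a\ge\tfrac{1}{n-m+\alpha}-\tfrac{1}{n-m+1}>0$), we get
\[
2p\sigma\!\int\!(|A|^2+nK)f_+^p\le\frac{2C_2p\sigma}{\gamma}\Big(\frac{p^2}{4r}\!\int\! f_+^{p-2}|\nabla f_+|^2+r\!\int\! f_+^p\frac{|\nabla A|^2}{W}+K\!\int\! f_+^p\Big).
\]

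Now comes the decisive step. Taking $r:=\max\{1,\,cp\sigma\}$ for a suitable $c=c(n,m,\alpha,\eta)$ and imposing $\sigma\le\ell p^{-1/2}$ and $p>\ell^{-1}$ with $\ell=\ell(n,\alpha,\eta)$ small enough, each of the three terms above is absorbed: the first into the $-\tfrac{3p(p-1)}{4}\int f_+^{p-2}|\nabla f_+|^2$ surviving from the cross-term step, the second into $-p\delta\int f_+^p\tfrac{|\nabla A|^2}{W}$, and the third into half of $-2p\delta K\int f_+^p$. The constraint $\sigma\lesssim p^{-1/2}$ is forced precisely here: one needs $r\gtrsim p\sigma$ to tame the coefficient $\tfrac{p^2}{r}$ against the good term $p(p-1)$, and simultaneously $\sigma r\lesssim1$ to tame the $|\nabla A|^2/W$ term, so $\sigma^2p\lesssim1$; the lower bound $p>\ell^{-1}$ absorbs the thresholds $p\ge p_0$, $p\ge2$, $p\ge c'\delta^{-1}$. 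What remains is $\tfrac{d}{dt}\int f_+^p\,d\mu\le -p\delta K\int f_+^p\,d\mu$, and integrating, together with $f_+(\cdot,0)\le W(\cdot,0)^\sigma\le\big((a\Theta+b)K\big)^\sigma$ (from condition (3), $H^2\le\Theta K$) and $\mu_0(\M)\le VK^{-n/2}$ (condition (2)), yields
\[
\int f_+^p\,d\mu\le\mathrm e^{-p\delta Kt}\int_{\M\times\{0\}}\! f_+^p\,d\mu_0\le\big((a\Theta+b)K\big)^{\sigma p}VK^{-\frac n2}\,\mathrm e^{-\delta pKt},
\]
which is \eqref{eq:Lpest}.

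The main obstacle is the parameter juggling of the last paragraph: arranging $r$, $\sigma$ and $p$ so that the reaction term and the gradient cross-term are \emph{simultaneously} absorbed into the two good gradient terms and the dissipative term $-2p\delta K\int f_+^p$, with a strict surplus, and checking that it is exactly this balancing (in particular the factor $p^2$ in $|\nabla u|^2=\tfrac{p^2}{4}f_+^{p-2}|\nabla f_+|^2$ versus the coefficient $p(p-1)$) that produces the admissible ranges $\sigma\le\ell p^{-1/2}$, $p>\ell^{-1}$. A secondary, more routine point is the justification that $\int f_+^p\,d\mu$ is finite and $C^1$ in $t$ and that $f_+^{p-1}$ is an admissible test function across $\partial\{f_+>0\}$; this follows the standard approximation procedure as in \cite{LangfordNguyen,Hu84}.
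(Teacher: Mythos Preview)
Your proposal is correct and follows essentially the same approach as the paper, which simply cites \cite[Lemma~4.6]{LangfordNguyen}: you have accurately reconstructed that argument, combining the evolution inequality of Lemma~\ref{lem:evol_fsigma} (tested against $f_+^{p-1}$) with the Poincar\'e-type inequality of Proposition~\ref{prop:Poincare} (applied to $u=f_+^{p/2}$), and correctly identified the parameter balancing that forces $\sigma\lesssim p^{-1/2}$.
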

\begin{proof}
This follows from Lemma \ref{lem:evol_fsigma} and Proposition \ref{prop:Poincare} exactly as in \cite[Lemma 4.6]{LangfordNguyen}.
\end{proof}

This $L^2$-estimate (for $v:=f_+^{\frac{p}{2}}$) can be bootstrapped to an $L^\infty$-estimate using Stampacchia iteration, exactly as in the proof of \cite[Theorem 4.1]{LangfordNguyen}.

\section{Derivative estimates}

Next, we use the cylindrical estimate to obtain a sharp ``gradient estimate" for the second fundamental form. We need the following universal interior estimates for solutions with initial data in the class $\mathcal{C}^n_m(K,\alpha,V,\Theta)$.

\begin{proposition}\label{prop:class C universal interior estimates}
Let $X:\M^n\times [0,T)\to\mathbb{S}_K^{n+1}$ be a maximal solution to mean curvature flow with initial condition in the class $\mathcal{C}_m^n(K,\alpha,V,\Theta)$. Defining $\Lambda_0$ and $\lambda_0$ by
\begin{equation}\label{eq:lambdas}
\Lambda_0/2:=\frac{1}{n-m+\alpha}\Theta^2+2(m-\alpha)\;\;\text{and}\;\;\mathrm{e}^{2n\lambda_0}:=1+\frac{n}{n+\Lambda_0},
\end{equation}
we have
\begin{equation}\label{eq:class C universal T bound}
\mathrm{e}^{2nKT}\ge 1+\frac{2n}{\Lambda_0}\,,
\end{equation}
and
\begin{equation}\label{eq:class C universal interior estimates}
\max_{\M^n\times\{\lambda_0K^{-1}\}}\vert\cd^k{A}\vert^2\leq \Lambda_kK^{k+1}
\end{equation}
for every $k\in\mathbb{N}$, where $\Lambda_k$ depends only on $n$, $k$ and $\Lambda_0$.
\end{proposition}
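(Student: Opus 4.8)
The plan is to derive \eqref{eq:class C universal T bound} from a scalar ODE comparison, and then bootstrap to the derivative bounds \eqref{eq:class C universal interior estimates} via a short-time smoothing estimate in the style of Ecker--Huisken. First I would record that, under the class hypotheses, Corollary~\ref{cor:maximum principle} (or, more simply, the maximum principle applied to the evolution of $W=aH^2+bK$) gives a universal bound $|A|^2\le \Lambda_0 K + \text{(lower order)}$ on a definite time interval; in particular, combining the pinching inequality~(1) in the class definition with the bound $H^2\le\Theta K$ at $t=0$, we get $|A|^2\le\frac{\Lambda_0}{2}K$ at $t=0$. The key scalar input is the evolution inequality for $H^2_{\max}(t):=\max_{\M\times\{t\}}H^2$, which, since $(\partial_t-\Delta)H^2 = 2(|A|^2+nK)H^2 - 2|\nabla H|^2 - 2|A|^2 H^2/\text{(stuff)}$, combined with the preserved pinching $|A|^2\le \frac{1}{n-m+\alpha}H^2 + 2(m-\alpha)K$, yields a differential inequality of the form $\frac{d}{dt}H^2_{\max}\le c H^4_{\max} + \dots$; more cleanly, one shows $|A|^2_{\max}$ satisfies $\frac{d}{dt}|A|^2_{\max}\le 2|A|^2_{\max}(|A|^2_{\max}+nK)$ by the maximum principle applied to the evolution of $|A|^2$ (the gradient and negative reaction terms are discarded). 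Solving the Riccati-type ODE $y' = 2y(y+nK)$ with $y(0)=\frac{\Lambda_0}{2}K$ shows $y$ (hence $|A|^2_{\max}$) blows up no earlier than the time $T_*$ at which $\mathrm{e}^{2nKT_*} = 1 + \frac{2n}{\Lambda_0}$; since $X$ is maximal, $T\ge T_*$, which is exactly \eqref{eq:class C universal T bound}.

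Next, for \eqref{eq:class C universal interior estimates}, I would work on the time interval $[0,\lambda_0 K^{-1}]$, which by \eqref{eq:class C universal T bound} and the definition of $\lambda_0$ (note $\mathrm{e}^{2n\lambda_0}=1+\frac{n}{n+\Lambda_0}<1+\frac{2n}{\Lambda_0}$ since $\Lambda_0>0$, so $\lambda_0 K^{-1}<T$ with room to spare) lies strictly inside the maximal interval, and on which the same ODE comparison gives a universal pointwise bound $|A|^2\le \Lambda_0 K$ — indeed $y(\lambda_0 K^{-1}) = \frac{\Lambda_0 K}{2}\cdot\frac{1}{1-\frac{\Lambda_0}{2n}(\mathrm{e}^{2n\lambda_0}-1)}$, and the choice $\mathrm{e}^{2n\lambda_0}-1 = \frac{n}{n+\Lambda_0}$ is engineered precisely so that this equals $\Lambda_0 K$. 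With a uniform bound $|A|^2\le\Lambda_0 K$ in hand on $\M\times[0,\lambda_0 K^{-1}]$, the higher derivative estimates follow from Bernstein-type interior estimates for mean curvature flow: one runs the standard induction on $k$, where at stage $k$ one considers the test function $t^k(|\nabla^k A|^2 + \text{lower-order products})$ (or a polynomial in $t$ times a suitable combination), uses the evolution equation $(\partial_t-\Delta)|\nabla^k A|^2 \le -2|\nabla^{k+1}A|^2 + P_k$ where $P_k$ is a universal polynomial in $|A|,\dots,|\nabla^{k}A|$ and $K$, and applies the maximum principle. The outcome is $\max_{\M\times\{\lambda_0 K^{-1}\}}|\nabla^k A|^2\le \Lambda_k K^{k+1}$ with $\Lambda_k = \Lambda_k(n,k,\Lambda_0)$, the power of $K$ being forced by scaling (under the parabolic rescaling $X\mapsto \sqrt{K}X$, $t\mapsto Kt$, which sends $\mathbb{S}^{n+1}_K$ to $\mathbb{S}^{n+1}_1$, the quantity $|\nabla^k A|^2$ scales like $K^{k+1}$ and the elapsed time $\lambda_0 K^{-1}$ becomes $\lambda_0$).

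For the write-up I would reduce to $K=1$ by this scaling at the outset, so that one only needs the estimate $\max_{\M\times\{\lambda_0\}}|\nabla^k A|^2\le\Lambda_k$ for flows in $\mathbb{S}^{n+1}_1$ with $|A|^2\le\frac{1}{2}\Lambda_0$ at $t=0$, and the compatibility $\mathrm{e}^{2n\lambda_0}=1+\frac{n}{n+\Lambda_0}$ guarantees $|A|^2\le\Lambda_0$ on $[0,\lambda_0]$. The main obstacle — or rather the only real content beyond bookkeeping — is getting the \emph{sharp} constants and the clean relations in \eqref{eq:lambdas}: one must be careful that the Riccati comparison ODE $y'=2y(y+n)$ is genuinely an upper barrier for $|A|^2_{\max}(t)$ (which requires only the crude bound $(\partial_t-\Delta)|A|^2\le 2|A|^2(|A|^2+nK)$ and does not even use the pinching hypothesis), and that the definitions of $\Lambda_0$ and $\lambda_0$ are precisely the ones that make the barrier at time $\lambda_0 K^{-1}$ equal to $\Lambda_0 K$ rather than merely comparable to it. The Bernstein estimates themselves are entirely standard once the zeroth-order bound is uniform, so I would cite the usual references (Ecker--Huisken, Huisken--Sinestrari) and only indicate the induction scheme rather than carry out the polynomial estimates, which depend only on $n$, $k$ and $\Lambda_0$ by inspection.
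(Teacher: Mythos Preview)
Your proposal is correct and follows the standard route the paper has in mind (the paper's own proof is simply a reference to \cite[Proposition~4.7]{LangfordNguyen}, which carries out exactly this ODE-comparison-plus-Bernstein argument). One small algebraic slip: your explicit formula for the barrier should read $y(\lambda_0K^{-1})=\tfrac{\Lambda_0K}{2}\cdot\tfrac{\mathrm{e}^{2n\lambda_0}}{1-\frac{\Lambda_0}{2n}(\mathrm{e}^{2n\lambda_0}-1)}$ (you dropped the $\mathrm{e}^{2n\lambda_0}$ in the numerator), though your conclusion $y(\lambda_0K^{-1})=\Lambda_0K$ is nonetheless correct.
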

\begin{proof}
The proof is similar to \cite[Proposition 4.7]{LangfordNguyen}.
\end{proof}

\begin{theorem}[Gradient estimate (cf. {\cite[Theorem 6.1]{HuSi09}})]\label{thm:gradient estimate}
Let $X:\M^n\times[0,T)\to\mathbb{S}_K^{n+1}$ be a solution to mean curvature flow with initial condition in the class $\mathcal{C}_m^n(K,\alpha,V,\Theta)$. There exist constants $\delta=\delta(n,\alpha)$, $c=c(n,m,\alpha,\Theta)<\infty$, $\eta_0=\eta_0(n,m,\alpha)>0$ and, for every $\eta\in(0,\eta_0)$, $C_\eta=C_\eta(n,\alpha,V,\Theta,\eta)<\infty$ such that
\begin{equation}\label{eq:gradient estimate}
\vert\cd  A\vert^2\leq c\left[(\eta+\tfrac{1}{n-m+1})H^2-\vert{ A}\vert^2\right]W+C_\eta K^2\mathrm{e}^{-2\delta Kt}
\end{equation}
in $\M^n \times [\lambda_0K^{-1},T)$, where $\lambda_0$ is defined by \eqref{eq:lambdas}.
\end{theorem}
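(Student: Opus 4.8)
The plan is to establish the gradient estimate \eqref{eq:gradient estimate} by a maximum-principle argument applied to a suitable auxiliary quantity, following the strategy of Huisken--Sinestrari \cite[Theorem 6.1]{HuSi09}. Concretely, I would fix $\eta\in(0,\eta_0)$, let $g:=(\eta+\tfrac{1}{n-m+1})H^2-\vert A\vert^2$, and consider a function of the schematic form
\[
F:=\frac{\vert\cd A\vert^2}{gW+\Gamma K^2\mathrm{e}^{-2\delta Kt}} - cK
\]
(or, more robustly, the combination $\vert\cd A\vert^2 - c\,(gW) - C_\eta K^2\mathrm{e}^{-2\delta Kt}$ treated directly), where $c$ and $\Gamma$ are large constants to be chosen. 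The idea is that the cylindrical estimate \eqref{eq:cylindrical estimate} guarantees $g\ge -\eta' H^2 - C K\mathrm{e}^{-2\delta Kt}$ for $\eta'$ as small as we like, and Proposition \ref{prop:class C universal interior estimates} provides the interior bounds $\vert\cd^k A\vert^2\le \Lambda_k K^{k+1}$ at the slice $t=\lambda_0 K^{-1}$, which furnish the initial condition for the parabolic argument at time $\lambda_0 K^{-1}$.

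The key steps, in order, are: (i) record the evolution equation $(\pd_t-\Delta)\vert\cd A\vert^2 \le -2\vert\cd^2 A\vert^2 + c_n(\vert A\vert^2+K)\vert\cd A\vert^2$ (the standard Simons-type computation on $\mathbb S^{n+1}_K$, with the ambient curvature contributing only lower-order $K$-terms), together with the evolution equations for $g$ and for $W$ that have already been assembled in Lemma \ref{lem:evol_fsigma} and its proof — in particular $(\pd_t-\Delta)W = 2(\vert A\vert^2+nK)(W-bK)-2a\vert\cd H\vert^2$ and the inequality \eqref{eq:evolve_f_eta} for $f_\eta = -g$; (ii) compute $(\pd_t-\Delta)(gW)$, producing a leading good term $-2a\vert\cd H\vert^2\vert\cd A\vert^2$-type contribution and reaction terms proportional to $(\vert A\vert^2+nK)(gW)$, plus cross terms $-2\la\cd g,\cd W\ra$ and error terms controlled by the cylindrical estimate; (iii) compute $(\pd_t-\Delta)$ of the quotient $\vert\cd A\vert^2/(gW+\Gamma K^2\mathrm{e}^{-2\delta Kt})$, exactly as in the parabolic maximum-principle scheme: the denominator's reaction term $2(\vert A\vert^2+nK)$ cancels the corresponding term in the numerator up to the $\sigma$-type factor, the $-2\vert\cd^2 A\vert^2$ from the numerator together with the $-2a\vert\cd H\vert^2$-term in the denominator and the Kato inequality $\vert\cd A\vert^2\ge\tfrac{3}{n+2}\vert\cd H\vert^2$ are used to absorb the bad gradient terms coming from differentiating the quotient (a Cauchy--Schwarz / Young's inequality juggling of $\la\cd F,\cd\log(\cdot)\ra$), and the $-4\delta K$-type negativity in the denominator's evolution (inherited from $W$'s good behaviour, cf. Corollary \ref{cor:maximum principle}) beats the residual $K$-order reaction terms once $c$ and $\Gamma$ are large; (iv) check that at $t=\lambda_0 K^{-1}$ the interior estimate \eqref{eq:class C universal interior estimates} with $k=1$ forces $F\le 0$ (after possibly enlarging $C_\eta$ so that $\Gamma$ dominates $\Lambda_1$), and conclude by the maximum principle that $F\le 0$ on $\M^n\times[\lambda_0 K^{-1},T)$, which is precisely \eqref{eq:gradient estimate}.

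The main obstacle is step (iii): the bad gradient terms. Differentiating a quotient introduces terms like $W^{-1}\vert\cd A\vert\,\vert\cd(gW)\vert$ and $\vert\cd g\vert^2$, $\vert\cd W\vert^2$, and these must be dominated by the genuinely good terms $\vert\cd^2 A\vert^2$ and $\vert\cd H\vert^2 \vert\cd A\vert^2$. One has to use that $\vert\cd g\vert$ and $\vert\cd W\vert$ are each bounded by $C(H+\sqrt K)\vert\cd H\vert\le C(H+\sqrt K)\vert\cd A\vert$, and that $W\ge bK>0$ is uniformly positive, to convert everything into multiples of $\vert\cd A\vert^2/W$ and $\vert\cd^2A\vert^2/(gW)$; the point is that the Kato deficit $\vert\cd A\vert^2-\tfrac{3}{n+2}\vert\cd H\vert^2$, which is strictly positive by the definition of $\eta_0$ exactly as exploited in Lemma \ref{lem:evol_fsigma}, leaves a fixed fraction of $\vert\cd^2 A\vert^2$ and $\vert\cd A\vert^2\vert\cd H\vert^2$ available to absorb the cross terms via Young's inequality with a large constant. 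A secondary subtlety is ensuring the $\mathrm{e}^{-2\delta Kt}$ regularization term in the denominator does not spoil positivity or generate uncontrolled terms when hit by $\pd_t$; since $\pd_t(\Gamma K^2\mathrm{e}^{-2\delta Kt}) = -2\delta K\cdot\Gamma K^2\mathrm{e}^{-2\delta Kt}$ has a favourable sign relative to the quotient's reaction structure, this works out, but it must be checked that the constant $\delta=\delta(n,\alpha)$ here can be taken to be the same $\delta$ as in the cylindrical estimate, which it can after possibly shrinking it.
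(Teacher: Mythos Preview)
Your overall strategy is right --- bound $|\nabla A|^2$ divided by a positive function built from the cylindrical-estimate gap, exploit the Kato inequality to manufacture a good $|\nabla A|^2$ term in the denominator's evolution, and apply the maximum principle from the controlled time slice $t=\lambda_0K^{-1}$. But your specific choice of denominator, and your description of the mechanism in steps (ii)--(iii), are not quite what makes the argument close.

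The paper does not use $gW+\Gamma K^2\mathrm{e}^{-2\delta Kt}$ as a single denominator. Instead it bounds the quotient $|\nabla A|^2/(G_\eta G_0)$, a \emph{product} of two separately positive factors
\[
G_\eta:=2C_\eta K\mathrm{e}^{-2\delta Kt}+\big(\eta+\tfrac{1}{n-m+1}\big)H^2-|A|^2,\qquad G_0:=2C_0K+\tfrac{3}{n+2}H^2-|A|^2.
\]
Here $G_\eta>0$ comes from the cylindrical estimate and $G_0>0$ directly from the pinching. The coefficient $\tfrac{3}{n+2}$ in $G_0$ is chosen so that Kato exactly kills its gradient terms, giving simply $(\partial_t-\Delta)G_0\ge -2(|A|^2+nK)G_0$; the genuinely good gradient term $+\beta|\nabla A|^2$ with $\beta=\tfrac{1}{2}\big(\tfrac{3}{n+2}-\tfrac{1}{n-m+1}\big)$ sits in $(\partial_t-\Delta)G_\eta$, not in anything coming from $W$. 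Your step~(ii) claim that the good term in $(\partial_t-\Delta)(gW)$ is of ``$-2a|\nabla H|^2|\nabla A|^2$-type'' is not correct --- no such product appears; the good term is $2W\beta|\nabla A|^2$, coming from the $g$-factor alone.

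The reason the product structure matters is the cross term. If you treat $gW+\text{const}$ as a single function $D$, then $(\partial_t-\Delta)D$ carries $-2\langle\nabla g,\nabla W\rangle$, which is of order $W|\nabla A|^2$ with a constant that is in general \emph{larger} than $\beta$; it can swallow your good term and no $|\nabla^2A|^2$ is available at that level to rescue you. In the paper's two-factor quotient, the analogous cross term is $2\frac{|\nabla A|^2}{G_\eta G_0}\big\langle\frac{\nabla G_\eta}{G_\eta},\frac{\nabla G_0}{G_0}\big\rangle$, and at an interior spatial maximum the first-order condition $\nabla\big(\frac{|\nabla A|^2}{G_\eta G_0}\big)=0$ converts it into a quantity bounded by $4\frac{|\nabla^2 A|^2}{G_\eta G_0}$, which is then absorbed by the $-2|\nabla^2A|^2$ from the numerator's evolution. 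What survives at the maximum is
\[
\beta\,\frac{|\nabla A|^2}{G_\eta}\le (c_n+4)(|A|^2+nK)+2\delta K,
\]
and since $|A|^2+nK\le CG_0$ under the pinching, this bounds the quotient by a constant depending only on $n,\alpha$. Combined with the interior estimate at $t=\lambda_0K^{-1}$ and Young's inequality, the theorem follows. So the gap in your proposal is the handling of the cross-gradient term: the single-denominator form you wrote down does not provide a mechanism to control it, whereas the paper's product form, together with the first-order condition at the maximum, does.
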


Note that the conclusion is not vacuous since, by Proposition \ref{prop:class C universal interior estimates}, the maximal existence time of a solution with initial data in the class $\mathcal{C}_m^n(K,\alpha,V,\Theta)$ is at least $\frac{1}{2nK}\log\left(1+\frac{2n}{\Lambda_0}\right)>\lambda_0 K^{-1}$. 

\begin{proof}[Proof of Theorem \ref{thm:gradient estimate}]
We proceed as in \cite[Theorem 4.8]{LangfordNguyen}, which is inspired by \cite[Theorem 6.1]{HuSi09}. First note that (see, for example, \cite[Inequality (2.13)]{LangfordNguyen})
\bann
(\partial_t- \Delta)|\nabla A|^2 \leq{}& -2 |\nabla^2 A|^2 + c_n\lb |A|^2+nK\rb|\nabla A|^2\,.
\eann
We will control the bad term using the good term in the evolution equation for $\vert A \vert^2$ and the Kato inequality.
 
By the cylindrical estimate, given any $\eta<\eta_0$ we can find a constant $C_\eta=C_\eta(n,\alpha,V,\Theta,\eta)>2$ such that
\[
\vert A\vert^2-\frac{1}{n-m+1}H^2\leq \eta H^2+C_\eta K\mathrm{e}^{-2\delta Kt}\,,
\]
where $\delta=n\eta_0$, and hence
\[
G_\eta:=2C_\eta K\mathrm{e}^{-2\delta Kt}+\left(\eta+\frac{1}{n-m+1}\right)H^2-\vert A\vert^2\geq C_\eta K\mathrm{e}^{-2\delta Kt}>0\,.
\]
Moreover, since $\frac{1}{n-m+\alpha}<\frac{3}{n+2}$,
\[
G_0:=2C_0K+\frac{3}{n+2} H^2-\vert A\vert^2\geq C_0K>0\,,
\]
where $C_0:= 2(m-\alpha)$. By \eqref{eq:evolve_f_eta},
\bann
(\pd_t-\Delta)G_\eta={}&2(\vert A\vert^2+nK)(G_\eta-2C_\eta K\mathrm{e}^{-2\delta Kt})+ 4nK\left(\vert A\vert^2-\tfrac{1}{n}H^2\right)\\
{}&+2\left[\vert\cd  A\vert^2-\lb\eta+\tfrac{1}{n-1}\rb\vert\cd H\vert^2\right]-2\delta C_\eta K^2\mathrm{e}^{-2\delta Kt}\,.
\eann
Since $G_\eta\geq C_\eta K\mathrm{e}^{-2\delta Kt}$, we can estimate $G_\eta-2C_\eta K\mathrm{e}^{-2\delta Kt}\geq -G_\eta$. By the Kato inequality, we can estimate
\bann
\vert\cd  A\vert^2-\lb\tfrac{1}{n-m+1}+\eta\rb\vert\cd H\vert^2\ge{}& \tfrac{n+2}{3}\left[\tfrac{3}{n+2}-\tfrac{1}{n-m+1}-\eta\right]\vert\cd A\vert^2\\
\geq{}& \tfrac{\beta}{2}\vert\cd  A\vert^2\,,
\eann
where
\begin{equation}\label{eq:beta in grad est}
\beta := \frac{1}{2} \left( \frac{ 3}{n+2} - \frac{1}{n-m+1} \right),
\end{equation}
so long as $\eta\leq \left(2-\frac{3}{2(n+2)}\right)\beta$. Estimating, finally, 
\[
2\delta C_\eta K^2\mathrm{e}^{-2\delta Kt}\le 2\delta KG_\eta\,,
\]
we arrive at
\bann
(\pd_t-\Delta)G_\eta\geq{}&-2(\vert A\vert^2+nK)G_\eta+\beta\vert\cd  A\vert^2-2\delta KG_\eta\,.
\eann

Similarly,
\bann
(\pd_t-\Delta)G_0\geq{}&-2(\vert A\vert^2+nK)G_0\,.
\eann

We can now proceed exactly as in \cite[Theorem 4.8]{LangfordNguyen}: We seek a bound for the ratio $\frac{\vert\cd  A\vert^2}{G_\eta G_0}$. Note that, at a local spatial maximum of $\frac{\vert\cd  A\vert^2}{G_\eta G_0}$,
\bann
0=\cd_k\frac{\vert\cd  A\vert^2}{G_\eta G_0}=2\frac{\inner{\cd_k\cd  A}{\cd  A}}{G_\eta G_0}-\frac{\vert\cd  A\vert^2}{G_\eta G_0}\lb\frac{\cd_k G_\eta}{G_\eta}+\frac{\cd_kG_0}{G_0}\rb.
\eann
In particular,
\bann
4\frac{\vert\cd  A\vert^2}{G_\eta G_0}\inner{\frac{\cd G_\eta}{G_\eta}}{\frac{\cd G_0}{G_0}}\leq{}&\frac{\vert\cd  A\vert^2}{G_\eta G_0}\left\vert\frac{\cd G_\eta}{G_\eta}+\frac{\cd G_0}{G_0}\right\vert^2\leq4\frac{\vert\cd^2  A\vert^2}{G_\eta G_0}\,.
\eann
Suppose that $\frac{\vert \cd A\vert^2}{G_\eta G_0}$ attains a parabolic interior local maximum at $(p_0,t_0)$. Then, at $(p_0,t_0)$,
\bann
0\leq{}& (\pd_t-\Delta)\frac{\vert\cd  A\vert^2}{G_\eta G_0}\\
={}&\frac{(\pd_t-\Delta)\vert\cd  A\vert^2}{G_\eta G_0}-\frac{\vert\cd  A\vert^2}{G_\eta G_0}\lb\frac{(\pd_t-\Delta)G_\eta}{G_\eta}+\frac{(\pd_t-\Delta)G_0}{G_0}\rb\\
{}&+\frac{2}{G_\eta G_0}\inner{\cd\frac{\vert\cd  A\vert^2}{G_\eta G_0}}{\cd(G_\eta G_0)}+2\frac{\vert\cd  A\vert^2}{G_\eta G_0}\inner{\frac{\cd G_\eta}{G_\eta}}{\frac{\cd G_0}{G_0}}\\
\leq{}&\frac{\vert\cd  A\vert^2}{G_\eta G_0}\lb (c_n+4)(\vert A\vert^2+nK)+2\delta K-\beta\frac{\vert\cd  A\vert^2}{G_\eta}\rb
\eann
and hence
\bann
\frac{\vert\cd  A\vert^2}{G_\eta G_0}\leq{}& \frac{(c_n+4)(\vert A\vert^2+nK)+2\delta K}{2C_0K+\frac{3}{n+2}H^2-\vert A\vert^2}\,.
\eann
Since
\[
\vert A\vert^2\le \frac{1}{n-m+\alpha}H^2+2(m-\alpha)K\,,
\]
we obtain, at $(p_0,t_0)$,
\bann
\frac{\vert\cd  A\vert^2}{G_\eta G_0}
\le{}&C\,,
\eann
where $C$ depends only on $n$ and $\alpha$.

On the other hand, since $G_0>C_0K$ and $G_\eta>C_\eta K\mathrm{e}^{-2\delta Kt}$, if no interior local parabolic maxima are attained, then, by Proposition \ref{prop:class C universal interior estimates}, we have for any $t\ge \lambda_0K^{-1}$
\bann
\max_{\M^n\times \{t\}}\frac{\vert\cd{A}\vert^2}{G_0G_\eta}\leq{}&\max_{\M^n\times \{\lambda_0K^{-1}\}}\frac{\vert\cd{A}\vert^2}{G_0G_\eta}\\
\leq{}&\max_{\M^n\times \{\lambda_0K^{-1}\}}\frac{\vert\cd{A}\vert^2}{C_0C_\eta K^2\mathrm{e}^{-2\delta \lambda_0}}\\
\leq{}&\frac{\Lambda_1\mathrm{e}^{\lambda_0}}{C_0C_\eta}\\
\leq{}&\Lambda_1\mathrm{e}^{\lambda_0}\,.
\eann
The theorem now follows from Young's inequality.
\end{proof}

\begin{remark}
Fixing $\eta$ in \eqref{eq:gradient estimate} yields the cruder estimate
\begin{equation}\label{eq:gradient estimate eta=1}
\vert\cd  A\vert^2\leq C(H^4+K^2)\,,
\end{equation}
where $C=C(n,\alpha,V,\Theta)$. Indeed, an estimate of this form may actually be obtained without the aid of the cylindrical estimate (simply take $\eta=\frac{1}{n-m+\alpha}-\frac{1}{n-m+1}$, $\delta=0$, and $C_\eta=2(m-\alpha)$ in the proof). A non-quantitative version of the cylindrical estimate without the exponential decay term may then be obtained via a blow-up argument as in  the proof of the convexity estimate (Theorem \ref{thm:convexity estimate}) below (cf. \cite{LyNg}).
\end{remark}

The gradient estimate can be used to bound the first order terms which arise in the evolution equation for $\cd^2{A}$. A straightforward maximum principle argument exploiting this observation yields an analogous estimate for $\cd^2{A}$.

\begin{theorem}[Hessian estimate (cf. \cite{Hu84,HuSi09})]\label{thm:Hessian estimate}
Let $X:\M^n\times[0,T)\to\mathbb{S}_K^{n+1}$ be a solution to mean curvature flow with initial condition in the class $\mathcal{C}_m^n(K,\alpha,V,\Theta)$. There exists $C=C(n,\alpha,V,\Theta)$ such that
\ba\label{eq:scale_invariant_Hessian_estimate}
\vert\cd^2{A}\vert^2\leq C(H^{6}+K^{3})\;\;\text{in}\;\; \M^n \times[\lambda_0K^{-1},T)\,.
\ea
\end{theorem}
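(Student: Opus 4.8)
The plan is to run a maximum-principle argument on the quantity $\vert\cd^2 A\vert^2$ divided by a suitable power of a curvature-type barrier, exactly paralleling the proof of the gradient estimate above. First I would record the evolution inequality
\bann
(\pd_t-\Delta)\vert\cd^2 A\vert^2\le -2\vert\cd^3 A\vert^2+c_n(\vert A\vert^2+nK)\vert\cd^2 A\vert^2+c_n\vert\cd A\vert^2\vert\cd^2 A\vert^2+c_n\vert\cd A\vert^2\vert\cd^2 A\vert\vert A\vert\,,
\eann
where the last two terms collect the contributions of the curvature and the ambient metric (schematically $\cd^2A\ast\cd^2A\ast A\ast A$, $\cd^2A\ast\cd A\ast\cd A\ast A$, and lower order). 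The point is that every ``bad'' first-order term is controlled by $\vert\cd A\vert^2$, which by the crude gradient estimate \eqref{eq:gradient estimate eta=1} is bounded by $C(H^4+K^2)$, hence by $C(H^2+K)^2$. Thus the bad terms are all bounded by $C(H^2+K)^2\vert\cd^2 A\vert^2$ up to absorbing $\vert\cd^2A\vert$ into $\vert\cd^2A\vert^2+(\text{lower order})$ by Young.

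Next I would introduce the barrier $Q:=2C_0 K+\frac{3}{n+2}H^2-\vert A\vert^2$ (the function $G_0$ from the previous proof), which satisfies $Q\ge C_0 K>0$ on any solution in the class $\mathcal C^n_m$ and, by \eqref{eq:evolve_f_eta} with $\eta=\frac{1}{n-m+\alpha}-\frac{1}{n-m+1}$ and the Kato inequality,
\bann
(\pd_t-\Delta)Q\ge -2(\vert A\vert^2+nK)Q+\beta\vert\cd A\vert^2\,,
\eann
with $\beta$ as in \eqref{eq:beta in grad est}. The target is a bound for the ratio $R:=\dfrac{\vert\cd^2 A\vert^2}{Q^3}$. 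At an interior parabolic maximum of $R$ one has $0=\cd_k R$, which gives $\cd_k\vert\cd^2A\vert^2=3\vert\cd^2A\vert^2\,Q^{-1}\cd_k Q$, so that the gradient terms arising from $(\pd_t-\Delta)R$ recombine, and after discarding the favourable $-2\vert\cd^3A\vert^2$ and the (negative) term $-3\beta\vert\cd^2A\vert^2\vert\cd A\vert^2Q^{-1}$, one is left with
\bann
0\le R\Big((c_n+\text{const})(\vert A\vert^2+nK)+C\frac{\vert\cd A\vert^2}{Q}+\dots\Big)-(\text{good})\,.
\eann
Here $\dfrac{\vert\cd A\vert^2}{Q}\le \dfrac{C(H^4+K^2)}{C_0K+\frac{3}{n+2}H^2-(\text{stuff})}$; using the pinching bound $\vert A\vert^2\le\frac{1}{n-m+\alpha}H^2+2(m-\alpha)K<\frac{3}{n+2}H^2+2(m-\alpha)K$ one sees $Q$ is comparable to $H^2+K$, so $\dfrac{\vert\cd A\vert^2}{Q}\le C(H^2+K)$, and likewise $\vert A\vert^2+nK\le C(H^2+K)\le CQ$. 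Hence at such a maximum $R\le C/Q\cdot Q \dots$ — more precisely the right-hand side is bounded by $CR\,Q$ minus a good term, which forces $\vert\cd^2A\vert^2\le CQ^3\le C(H^2+K)^3\le C(H^6+K^3)$, as desired. If instead no interior parabolic maximum is attained on $\M^n\times[\lambda_0K^{-1},T)$, then $R$ is controlled by its value at $t=\lambda_0K^{-1}$, which is bounded by Proposition \ref{prop:class C universal interior estimates} (which gives $\vert\cd^2 A\vert^2\le\Lambda_2 K^3$ there) together with $Q\ge C_0K$; combining the two cases and rescaling out $K$ (or simply tracking constants) yields \eqref{eq:scale_invariant_Hessian_estimate}.

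The main obstacle I anticipate is purely bookkeeping: correctly collecting the reaction terms in $(\pd_t-\Delta)\vert\cd^2 A\vert^2$ — in particular the terms involving the ambient curvature $K$ and the ``$A\ast A$'' contractions with $\cd^2A$ — and verifying that each is dominated by $C(H^2+K)^2\vert\cd^2A\vert^2$ after invoking \eqref{eq:gradient estimate eta=1} and Young's inequality, so that the maximum-principle computation closes with the power $Q^3$ (rather than some other power). No blow-up or integral estimate is needed here; this is a direct maximum-principle argument, and it runs verbatim as in \cite[proof of the Hessian estimate]{HuSi09} and \cite{LangfordNguyen}, with $H^2+K$ in place of $H^2$. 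I would simply cite those references for the routine parts and only indicate the curvature-$K$ modifications.
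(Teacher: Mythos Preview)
Your strategy is in the right spirit, but there is a specific gap in the maximum-principle step that does not close with the ratio $R=\vert\cd^2 A\vert^2/Q^3$. When you expand $(\pd_t-\Delta)R$ and use the critical-point identity $\cd R=0$, the residual gradient term is
\[
+\,p(p-1)\,R\,\frac{\vert\cd Q\vert^2}{Q^2}\;=\;6R\,\frac{\vert\cd Q\vert^2}{Q^2}\qquad(p=3)\,,
\]
not zero; the terms do not simply ``recombine''. In the gradient-estimate proof this residual carried the coefficient $2$ (denominator $G_\eta G_0$, effectively $p=2$) and was \emph{exactly} absorbed by the good $-2\vert\cd^2A\vert^2$ term via the critical-point identity. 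Here, the analogous bound from $\cd R=0$ gives only $6R\vert\cd Q\vert^2/Q^2\le \tfrac{8}{3}\vert\cd^3A\vert^2/Q^3$, which overshoots the available $-2\vert\cd^3A\vert^2/Q^3$. Trying instead to absorb with the $-3\beta R\vert\cd A\vert^2/Q$ term (via $\vert\cd Q\vert^2\le c_nQ\vert\cd A\vert^2$ under pinching) requires $6c_n\le 3\beta$, and since $\beta=\tfrac12(\tfrac{3}{n+2}-\tfrac{1}{n-m+1})$ can be small this is not guaranteed. So as written, with both good terms ``discarded'', the argument does not close; and even retaining them, the absorption is not automatic for $p=3$ (while $p\le 2$ is dimensionally wrong).

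The paper's route (following \cite[Theorem 4.11]{LangfordNguyen} and \cite{HuSi09}) avoids this by a different device: one works with $\vert\cd^2A\vert^2/W^{5/2}$, where $W=aH^2+bK$ has the simpler evolution $(\pd_t-\Delta)W=2(\vert A\vert^2+nK)(W-bK)-2a\vert\cd H\vert^2$, and then \emph{adds} auxiliary terms $c_1\vert\cd A\vert^2/W^{3/2}$ and $C_1K^2\vert\cd A\vert^2/W^{7/2}$ to build a supersolution. The good $-\vert\cd^2A\vert^2$ terms contributed by the evolution of these auxiliary pieces absorb the bad reaction and cross-gradient terms, after which the combination $f-c_4H+\tfrac{C_4}{2\delta}K^{1/2}\mathrm{e}^{-2\delta Kt}$ is a genuine supersolution and the maximum principle together with Proposition~\ref{prop:class C universal interior estimates} at $t=\lambda_0K^{-1}$ finishes. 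Your idea of mimicking the gradient-estimate ratio argument is natural, but to make it work at the Hessian level you would likewise need to augment $R$ with a suitable multiple of $\vert\cd A\vert^2/Q^2$ (or similar) so that its $-\vert\cd^2A\vert^2$ contribution soaks up the residual; simply citing the gradient proof verbatim is not enough.
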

\begin{proof}
The proof is the same as that of \cite[Theorem 4.11]{LangfordNguyen}. 
\end{proof}

An inductive argument, exploiting estimates for lower order terms in the evolution equations for higher derivatives of $A$ as in Theorem \ref{thm:Hessian estimate}, can be applied to obtain estimates for spatial derivatives of $A$ to all orders. The evolution equation for $A$ then yields bounds for the mixed space-time derivatives (cf. \cite[Theorem 6.3 and Corollary 6.4]{HuSi09} and \cite[Theorem 4.14]{LangfordNguyen}).

\section{A convexity estimate}

Following White \cite{Wh03}, the gradient estimates allow us to obtain a convexity estimate via a blow-up argument.

\begin{theorem}\label{thm:convexity estimate}
Let $X:\M^n\times[0,T)\to\mathbb{S}_K^{n+1}$ be a solution to mean curvature flow with initial condition in the class $\mathcal{C}_m^n(K,\alpha,V,\Theta)$. For every $\eta>0$ there exists $h_\eta=h_\eta(n,\alpha,V,\Theta,\eta)<\infty$ such that
\begin{align}\label{eq:cylindrical estimate}
\vert H\vert(p,t)\ge h_\eta \sqrt{K}\;\implies\;\lambda_1(p,t)\ge -\eta \vert H\vert(p,t)\,.
\end{align}
\end{theorem}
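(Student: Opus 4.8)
The plan is to prove the contrapositive by a blow-up (contradiction) argument in the style of White \cite{Wh03}, following the corresponding estimate of \cite{LangfordNguyen} closely (cf.\ \cite{HuSi09,LyNg}). Note first that only late times matter: by Proposition \ref{prop:class C universal interior estimates} and the maximum principle, any solution with initial datum in $\mathcal{C}_m^n(K,\alpha,V,\Theta)$ satisfies $H^2\le c(n,m,\alpha,\Theta)K$ on $\M^n\times[0,\lambda_0K^{-1}]$, so the asserted implication is vacuous there once $h_\eta\ge\sqrt{c}$, while for $t\ge\lambda_0K^{-1}$ the derivative estimates of the previous two sections apply. Suppose, then, that the statement fails for some fixed $\eta>0$. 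For each $j$ there is a solution $X_j\colon\M_j^n\times[0,T_j)\to\mathbb{S}^{n+1}_{K_j}$ with initial data in the class, and a point $(p_j,t_j)$, $t_j\ge\lambda_0K_j^{-1}$, with $|H|(p_j,t_j)\ge j\sqrt{K_j}$ and $\lambda_1(p_j,t_j)<-\eta|H|(p_j,t_j)$. Since the pinching \eqref{eq:uniform quadratic pinching} gives $\lambda_1\ge-|A|\ge-c(n,m,\alpha)|H|$ wherever $|H|\ge\sqrt{K_j}$, the supremum $\eta_j$ of $-\lambda_1/|H|$ over $\{|H|\ge j\sqrt{K_j}\}$ (among all solutions in the class) lies in $(\eta,c(n,m,\alpha)]$ and is non-increasing in $j$; so $\eta_j\downarrow\eta_\infty\in[\eta,c(n,m,\alpha)]$, and I may in addition choose $(p_j,t_j)$ with $-\lambda_1(p_j,t_j)/|H|(p_j,t_j)\ge\eta_j-1/j$, making it an ``almost minimiser'' of $\lambda_1/|H|$.

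Next I would rescale parabolically about $(p_j,t_j)$ by $\rho_j:=|H|(p_j,t_j)$, so that the rescaled flow $\widetilde X_j$ has $|\widetilde H|(p_j,0)=1$, lives in the sphere of curvature $\widetilde K_j:=K_j\rho_j^{-2}\le j^{-2}$, and exists on a time interval containing $[-\lambda_0j^2,0]$. Since $\widetilde K_j\to0$, the ambient spheres converge to $\R^{n+1}$ and the existence intervals exhaust $(-\infty,0]$. The crude gradient estimate \eqref{eq:gradient estimate eta=1}, the Hessian estimate \eqref{eq:scale_invariant_Hessian_estimate} and their higher-order analogues control $|\widetilde\nabla\widetilde H|$ and $\partial_t\widetilde H$ by $\widetilde H^2+\widetilde K_j$, so $|\widetilde H|$ --- hence, by the pinching, $|\widetilde A|$ and all its derivatives --- stays bounded on a parabolic neighbourhood of $(p_j,0)$ of definite size. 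The compactness theorem for mean curvature flow then gives a subsequence converging in $C^\infty_{\mathrm{loc}}$ to an ancient flow $X_\infty\colon\M_\infty^n\times(-\infty,0]\to\R^{n+1}$ through a point $p_\infty$ with $|H_\infty|(p_\infty,0)=1$.

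I would then pass the estimates to the limit. Both \eqref{eq:uniform quadratic pinching} and the cylindrical estimate (Theorem \ref{thm:cylindrical estimate}) survive the rescaling, with every $K$-term, and the exponential term (whose argument stays bounded), tending to $0$; hence $|A_\infty|^2-\tfrac{1}{n-m+1}H_\infty^2\le\eta'H_\infty^2$ for every $\eta'\in(0,\eta_0)$, i.e.\ $|A_\infty|^2\le\tfrac{1}{n-m+1}H_\infty^2$ on $\M_\infty\times(-\infty,0]$. In particular $H_\infty$ vanishes only where $A_\infty$ does; choosing the normal so that $H_\infty(p_\infty,0)=1$ and using the pinching together with the strong maximum principle for $H_\infty$, one finds $H_\infty>0$ on the component through $p_\infty$ (a component with $H_\infty\equiv0$ is a hyperplane). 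Restricting to that component, the ``almost minimiser'' property gives $\lambda_1/H_\infty\ge-\eta_\infty$ everywhere, with equality at $(p_\infty,0)$. Since in $\R^{n+1}$ one has $(\partial_t-\Delta)H_\infty=|A_\infty|^2H_\infty$ and $(\partial_t-\Delta)\lambda_1\ge|A_\infty|^2\lambda_1$ (the latter in the viscosity sense), the reaction terms cancel in the ratio and $\varphi:=\lambda_1/H_\infty$ is a Lipschitz viscosity supersolution of $(\partial_t-\Delta)\varphi=\tfrac{2}{H_\infty}\langle\nabla\varphi,\nabla H_\infty\rangle$ attaining its minimum $-\eta_\infty$ at the interior spacetime point $(p_\infty,0)$; the strong maximum principle then forces $\lambda_1\equiv-\eta_\infty H_\infty<0$ on $\M_\infty\times(-\infty,0]$.

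The crux --- and where I expect the main difficulty --- is ruling out such a limit. Here $\lambda_1=-\eta_\infty H_\infty$ is an extremal eigenvalue of $A_\infty$ of constant multiplicity, and Hamilton's strong maximum principle for the reaction-diffusion system satisfied by $A_\infty$ forces the flow to split off Euclidean factors; an induction on the dimension then reduces the problem to low-dimensional cases in which the pinching $|A|^2\le\tfrac{1}{n-m+1}H^2$ together with ancientness forces $\lambda_1\ge0$ --- for $m=1$ this is immediate, since the pinching already reads $|A|^2\le\tfrac{1}{n}H^2$ and forces umbilicity, hence a shrinking sphere or hyperplane --- contradicting $\lambda_1\equiv-\eta_\infty H_\infty<0$ at $p_\infty$. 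This last rigidity step is precisely the one carried out in \cite{LangfordNguyen}; alternatively one may invoke the classification of suitably pinched ancient mean curvature flows (cf.\ \cite{LyNg}). Everything else is routine given the cylindrical and derivative estimates already established.
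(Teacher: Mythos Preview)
Your blow-up strategy coincides with the paper's: both introduce an optimal threshold $\eta_0$ (your $\eta_\infty$), rescale along an almost-extremal sequence, pass to a limit mean curvature flow in $\R^{n+1}$ on which $\lambda_1/H$ attains the negative interior minimum $-\eta_0$, and seek a contradiction via the strong maximum principle. The paper, however, finishes in a single line by citing White's strong maximum principle for the second fundamental form \cite[Appendix]{Wh03}, which already rules out such a minimum on a Euclidean mean curvature flow; neither the cylindrical pinching $\vert A_\infty\vert^2\le\tfrac{1}{n-m+1}H_\infty^2$ nor ancientness of the limit plays any role in the contradiction.

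Your final paragraph takes a longer route and contains a genuine imprecision. Hamilton's tensor maximum principle applied to the system $(\partial_t-\Delta)A_\infty=\vert A_\infty\vert^2A_\infty$ with the cone $\{\lambda_1\ge-\eta_\infty H\}$ renders the $(-\eta_\infty H_\infty)$-eigenspace of $A_\infty$ parallel, but by the Gauss equation the integral leaves of this distribution carry intrinsic sectional curvature $\eta_\infty^2H_\infty^2>0$, not zero; so the limit does \emph{not} ``split off Euclidean factors'' along these directions, and your dimensional induction does not proceed as stated. The mechanism packaged in White's appendix is instead the equality case of the viscosity inequality for $\lambda_1/H$ combined with the Codazzi equations, and requires neither the $m$-pinching nor that the limit be ancient. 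Your alternative --- invoking a classification of pinched ancient flows --- would close the argument, but is a far heavier tool than the one the paper actually uses.
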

\begin{proof}
Let $\eta_0$ be the infimum over all $\eta>0$ such that the conclusion holds. By the quadratic pinching hypothesis, $\eta_0<\infty$. Suppose, contrary to the claim, that $\eta_0>0$. Then we can find a sequence of mean curvature flows $X_j:\M_j\times[0,T_j)\to\mathbb S_{K}^{n+1}$ with initial data $X_j(\cdot,0):\M_j\to\mathbb S_{K}^{n+1}$ in the class $\mathcal{C}_m^n(K,\alpha,V,\Theta)$ and a sequence of points $p_j\in \M_j$ and times $t_j\in[0,T_j)$ such that
\[
H_j(p_j,t_j)\to\infty\;\;\text{and}\;\;\frac{\lambda_1^j}{H_j}(p_j,t_j)\to -\eta_0\,.
\]
After translating in time, translating and rotating in $\R^{n+2}$, and parabolically rescaling by $H(p_j,t_j)$, we obtain a sequence of flows $\hat X_j:\M_j\times(-\hat T_j,0]\to (\mathbb S_{K_j}^{n+1}-K_j^{-\frac{1}{2}}e_{n+2})$ such that, at the spacetime origin, $\hat H_j\equiv 1$ and $\hat \lambda_1^j/\hat H_j\to -\eta_0$, where $K_j:= H^{-2}_j(p_j,t_j)K$ and $\hat T_j:=H_j^{2}(p_j,t_j)t_j$. By Proposition \ref{prop:class C universal interior estimates}, $\hat T_j$ is bounded uniformly from below. By the gradient estimates, $\vert \hat A_j\vert$ is bounded uniformly in a backward parabolic cylinder of uniform radius about the spacetime origin. By standard bootstrapping arguments, we obtain a limit flow $X_\infty:\M_\infty\times (-T_\infty,0]\to \R^{n+1}\times \{0\}$ on which $\lambda_1^\infty/H_\infty$ attains a negative local parabolic minimum, $-\eta_0$, at the spacetime origin. But this violates the strong maximum principle for the second fundamental form \cite[Appendix]{Wh03}.
\end{proof}

\section{Singularity models}

To illustrate the utility of our pinching estimates, we will obtain a precise description of singular regions after performing certain types of `blow-ups'. 

Let $X:M\times[0,T)\to \mathbb S_K^{n+1}$ be a maximal solution to mean curvature flow such that $T<\infty$. Standard bootstrapping estimates imply that
\[
\limsup_{t\to T}\max_{\M\times\{t\}}\vert A\vert^2=\infty\,.
\]
Following standard nomenclature, we say that the ``finite time singularity'' is \emph{type-I} if
\[
\limsup_{t\to T}\,(T-t)\max_{\M\times\{t\}}\vert A\vert^2<\infty
\]
and \emph{type-II} otherwise.

Well-known ideas due to Hamilton (see, for example, \cite{HamiltonHarnack,HuSi99a}) yield a classification of certain blow-up sequences. For type-I singularities, we obtain the following.

\begin{theorem}[Finite time type-I singularities]\label{thm:type-I singularities}
Let $X:M\times[0,T)\to \mathbb S_K^{n+1}$ be a solution to mean curvature flow which undergoes a finite time type-I singularity. Suppose that $X(\cdot,0)$ satisfies \eqref{eq:strict quadratic pinching} for some $m$. Given sequences of times $t_j\in [0,T)$ and points $p_j\in \M$ such that $\limsup_{j\to\infty}\vert A(p_j,t_j)\vert\to\infty$, the sequence of solutions $X_j:\M\times [-r_j^{-2}t_j,1)\to r_j^{-1}(\mathbb S_{K_j}-e_{n+2})\subset \R^{n+2}$ obtained by translating the point $(X(p_j,t_j),t_j)$ to the spacetime origin in $\R^{n+2}\times\R$, rotating so that $T_{0}(\mathbb S_{K}^{n+1}-X(p_j,t_j))$ becomes $\R^{n+1}\times\{0\}$ and parabolically rescaling by $r_j^{-1}:=(T-t_j)^{-\frac{1}{2}}$, converge locally uniformly in the smooth topology, after passing to a subsequence and performing a fixed rotation, to a maximal, locally convex, type-I ancient solution $X:(\R^k\times \M^{n-k}_\infty)\times(-\infty,1)\to\R^{n+1}$ to mean curvature flow in $\R^{n+1}$, $k<m$, which satisfies
\[
\vert\cd  A\vert^2\leq c\left(\tfrac{1}{n-m+1}H^2-\vert{ A}\vert^2\right)H^2\,,
\]
where $c$ depends only on $X(\cdot,0)$ and the restriction $X|_{\{0\}\times\M^{n-k}_\infty}$ is locally uniformly convex and solves mean curvature flow in $\{0\}\times \R^{n-k+1}$. If $k=m-1$, then $X|_{\{0\}\times\M^{n-k}_\infty}$ is totally umbilic, else
\[
\vert A\vert^2<\tfrac{1}{n-m+1}H^2\,.
\]
\end{theorem}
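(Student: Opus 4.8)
The plan is to combine Hamilton's blow-up machinery with the pinching estimates established earlier in the paper. First I would invoke the assumed type-I hypothesis and standard bootstrapping estimates (Theorems~\ref{thm:gradient estimate}, \ref{thm:Hessian estimate} and their higher-order analogues) to extract, from the rescaled sequence $X_j$, a smooth limit flow $X_\infty:\M_\infty\times(-\infty,1)\to\R^{n+1}$ that is complete, ancient, type-I, and non-flat (since $\vert A\vert\equiv 1$ at the spacetime origin in the rescaled picture). The ambient sphere $\mathbb S_{K_j}^{n+1}-e_{n+2}$ converges to $\R^{n+1}\times\{0\}$ because $K_j=r_j^{-2}K\to 0$ under the type-I rescaling, so the limit genuinely solves Euclidean mean curvature flow. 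The convexity estimate (Theorem~\ref{thm:convexity estimate}) passes to the limit to give $\lambda_1^\infty\ge 0$, i.e. the limit is weakly convex; this is where the type-I hypothesis is essential, since it guarantees the rescaled curvatures stay bounded so that the convexity estimate's hypothesis $\vert H\vert\ge h_\eta\sqrt K$ is satisfied in the rescaled frame after letting $j\to\infty$.

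Next I would analyze the structure of $\M_\infty$ using the strong maximum principle. Weak convexity plus the strong maximum principle for $A$ (as in White \cite{Wh03}, also used in the proof of Theorem~\ref{thm:convexity estimate}) forces a splitting: the null space of $A$ is parallel and of constant rank, so $\M_\infty$ splits isometrically and as a flow as $\R^k\times\M_\infty^{n-k}$ with $\M_\infty^{n-k}$ strictly (locally uniformly) convex and solving mean curvature flow in $\R^{n-k+1}$. To pin down $k<m$, I would pass the uniform quadratic pinching inequality \eqref{eq:uniform quadratic pinching}, in its improved cylindrical form \eqref{eq:cylindrical estimate}, to the limit. Under type-I rescaling the $C_\eta K\mathrm e^{-2\delta Kt}$ term scales to $C_\eta K_j\mathrm e^{-2\delta K_j t}\to 0$, so the limit satisfies $\vert A\vert^2-\tfrac{1}{n-m+1}H^2\le \eta H^2$ for every $\eta>0$, hence $\vert A\vert^2\le\tfrac{1}{n-m+1}H^2$ on $\M_\infty$. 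On a product $\R^k\times\M_\infty^{n-k}$ with the strictly convex factor carrying curvature eigenvalues summing to $H_\infty>0$ (whenever $H_\infty\ne 0$), a short linear-algebra computation shows $\vert A\vert^2\le\tfrac1{n-m+1}H^2$ forces $k\le m-1$; if $k=m-1$ one further checks that equality in the induced pinching on the $(n-k)=(n-m+1)$-dimensional convex factor forces all its eigenvalues equal, i.e. total umbilicity, and otherwise the strict inequality $\vert A\vert^2<\tfrac1{n-m+1}H^2$ propagates. The gradient estimate \eqref{eq:gradient estimate}, with $\eta\to 0$ and the exponential term killed by rescaling, passes to the limit to give $\vert\nabla A\vert^2\le c(\tfrac1{n-m+1}H^2-\vert A\vert^2)H^2$ with $c$ depending only on the initial data.

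The main obstacle will be the rigidity step that converts the weak inequalities surviving the blow-up into the asserted \emph{strict} structural statements — in particular showing $k<m$ (rather than merely $k\le m$) and, in the borderline case $k=m-1$, deducing total umbilicity of the convex factor. This requires care because the blow-up only delivers the non-strict inequality $\vert A\vert^2\le\tfrac1{n-m+1}H^2$, and one must rule out the degenerate configuration where $\M_\infty^{n-k}$ is a round sphere factor of the ``wrong'' dimension; here I would feed the splitting back into the linear-algebra identity used in \S2.2 (the same computation that shows $\mc M^{m,n-m}(r,s)$ saturates the pinching) to see that equality can only occur on an $(n-m+1)$-dimensional totally umbilic factor, and otherwise a strong-maximum-principle argument applied to the tensor $\tfrac1{n-m+1}H g-A$ on the limit flow upgrades $\le$ to $<$. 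A secondary technical point is justifying that all the estimates — convexity, gradient, and cylindrical — genuinely survive passage to the limit uniformly in $j$; this is routine given Proposition~\ref{prop:class C universal interior estimates} and the lower bound on $\hat T_j$, but must be stated. Finally, the claim $k<m$ combined with $k<n$ (non-flatness) completes the topological description $\M_\infty=\R^k\times\M_\infty^{n-k}$.
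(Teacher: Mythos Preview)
Your proposal is correct and follows essentially the same approach as the paper's proof, which simply cites Theorems~\ref{thm:cylindrical estimate}, \ref{thm:gradient estimate} and \ref{thm:convexity estimate}, invokes the blow-up argument of \cite[Section~4]{HuSi99a}, and appeals to the strong maximum principle for the strict inequality when $k<m-1$. One small slip: the rescaled sectional curvature is $K_j=r_j^{2}K\to 0$, not $r_j^{-2}K$ (your conclusion that the ambient space flattens is nonetheless correct).
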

\begin{proof}
Using Theorems \ref{thm:cylindrical estimate}, \ref{thm:gradient estimate} and \ref{thm:convexity estimate}, the conclusion is obtained as in \cite[Section 4]{HuSi99a} (strict inequality in the quadratic pinching when $k<m-1$ follows from the strong maximum principle).
\end{proof}

\begin{remark}
In fact, in Theorem \ref{thm:type-I singularities}, the blow-up is a shrinking round orthogonal cylinder for \emph{every} $k$. Indeed, using Hamilton's extension of Huisken's monotonicity formula to general ambient spaces \cite{HamMono} and his matrix Harnack estimate for the heat equation \cite{HamiltonMatrixHarnack}, we may proceed as in \cite{Hu90} to show that the the blow-up is a Euclidean self-shrinking solution. By Theorem \ref{thm:convexity estimate}, the limit is weakly convex. The conclusion then follows from Huisken's classification result \cite{Hu93}. We believe that it should be possible to reach this conclusion without using the monotonicity formula, however.
\end{remark}

For type-II singularities, we obtain the following.

\begin{theorem}[Finite time type-II singularities]
Let $X:M\times[0,T)\to \mathbb S_K^{n+1}$ be a solution to mean curvature flow which undergoes a finite time type-II singularity. Suppose that $X(\cdot,0)$ satisfies \eqref{eq:strict quadratic pinching} for some $m$. There are sequences of times $t_j\in [0,T)$, points $p_j\in \M$ and scales $r_j\to 0$ such that the sequence of solutions $X_j:\M\times [-r_j^{-2}t_j,r_j^{-2}(T-t_j-j^{-1}))\to r_j^{-1}(\mathbb S_{K_j}-e_{n+2})\subset \R^{n+2}$ obtained by translating $(X(p_j,t_j),t_j)$ to the spacetime origin in $\R^{n+2}\times\R$, rotating so that $T_{0}(\mathbb S_{K}^{n+1}-X(p_j,t_j))$ becomes $\R^{n+1}\times\{0\}$ and parabolically rescaling by $r_j^{-1}$, converge locally uniformly in the smooth topology, after passing to a subsequence and performing a fixed rotation, to a locally convex eternal solution $X:(\R^k\times \M^{n-k}_\infty)\times(-\infty,0)\to\R^{n+1}$ to mean curvature flow in $\R^{n+1}$, $k<m-1$, which satisfies
\[
\vert A\vert^2<\tfrac{1}{n-m+1}H^2\;\;\text{and}\;\; \vert\cd  A\vert^2\leq c\left(\tfrac{1}{n-m+1}H^2-\vert{ A}\vert^2\right)H^2\,,
\]
where $c$ depends only on $X(\cdot,0)$ and the restriction $X|_{\{0\}\times\M^{n-k}_\infty}$ is a locally uniformly convex translating solution to mean curvature flow in $\{0\}\times \R^{n-k+1}$. If $k=m-2$, then $X|_{\{0\}\times\M^{n-k}_\infty}$ is the bowl soliton.
\end{theorem}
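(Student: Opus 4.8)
The plan is to argue as in the proof of Theorem~\ref{thm:type-I singularities}, but with Huisken's type-I rescaling replaced by Hamilton's type-II point selection, so that the limit is an \emph{eternal} solution; one then promotes ``eternal and convex'' to ``translating soliton'' via Hamilton's Harnack inequality and, in the borderline case $k=m-2$, identifies the cross-section with the bowl soliton. (The hypothesis \eqref{eq:strict quadratic pinching} ensures that $X(\cdot,0)$ belongs to $\mathcal{C}_m^n(K,\alpha,V,\Theta)$ for suitable $\alpha,V,\Theta$, so Theorems~\ref{thm:cylindrical estimate}, \ref{thm:gradient estimate}, \ref{thm:Hessian estimate} and \ref{thm:convexity estimate} are available.)

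\textbf{Step 1 (point selection).} For each $j$ choose $(p_j,t_j)\in\M\times[0,T-j^{-1}]$ at which $\vert A\vert^2(T-j^{-1}-t)$ is maximal over $\M\times[0,T-j^{-1}]$, and set $r_j:=\vert A(p_j,t_j)\vert^{-1}$. Since the singularity is type-II, this maximum tends to $\infty$; combined with the curvature bound of Proposition~\ref{prop:class C universal interior estimates} on $[0,\lambda_0K^{-1}]$, this forces $r_j\to 0$ and both $r_j^{-2}t_j\to\infty$ and $r_j^{-2}(T-t_j-j^{-1})\to\infty$, so the rescaled flows $X_j$ in the statement are defined on time intervals exhausting $\R$. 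The choice of $(p_j,t_j)$ yields the standard estimate $\vert\hat A_j\vert^2\le \Omega_j/(\Omega_j-\hat t)\to 1$ locally uniformly in $\hat t$, where $\Omega_j:=r_j^{-2}(T-t_j-j^{-1})\to\infty$, while $\vert\hat A_j\vert=1$ at the spacetime origin; and $K_j=r_j^2K\to 0$.

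\textbf{Step 2 (smooth limit).} Theorems~\ref{thm:gradient estimate} and \ref{thm:Hessian estimate} and their higher-order analogues give, after rescaling, uniform bounds for $\vert\cd^k\hat A_j\vert$ on fixed parabolic neighbourhoods of the spacetime origin (Proposition~\ref{prop:class C universal interior estimates} guarantees that $t_j$ eventually lies well inside the interval $[\lambda_0K^{-1},T)$ on which these estimates hold). Standard compactness for mean curvature flow then produces, along a subsequence, a smooth eternal limit flow $X_\infty:\M_\infty\times(-\infty,\infty)\to\R^{n+1}$ with $\vert A_\infty\vert\le 1=\vert A_\infty\vert(\cdot,0)$ at the origin.

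\textbf{Step 3 (splitting and the range of $k$).} Rescaling the convexity estimate (Theorem~\ref{thm:convexity estimate}) and letting $j\to\infty$, then $\eta\to 0$ (using $h_\eta\sqrt{K_j}\to 0$), shows $X_\infty$ is weakly convex. Since $K_j\to 0$ and $\mathrm{e}^{-2\delta K_j\hat t}\to 1$ on compact time sets, the cylindrical estimate (Theorem~\ref{thm:cylindrical estimate}) descends (first $j\to\infty$, then $\eta\to 0$) to $\vert A_\infty\vert^2\le\tfrac{1}{n-m+1}H_\infty^2$, and the gradient estimate (Theorem~\ref{thm:gradient estimate}) to $\vert\cd A_\infty\vert^2\le c(\tfrac{1}{n-m+1}H_\infty^2-\vert A_\infty\vert^2)H_\infty^2$. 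Hamilton's splitting theorem for weakly convex solutions decomposes $X_\infty$ isometrically as $\R^k\times\M_\infty^{n-k}$ with $\M_\infty^{n-k}$ locally uniformly convex, and, exactly as in Theorem~\ref{thm:type-I singularities}, the strong maximum principle upgrades the pinching to the strict inequality $\vert A_\infty\vert^2<\tfrac{1}{n-m+1}H_\infty^2$ whenever $k<m-1$. On the strictly convex factor $\M_\infty^{n-k}$ all principal curvatures are positive, so Cauchy--Schwarz gives $\vert A_\infty\vert^2\ge\tfrac{1}{n-k}H_\infty^2$, whence $n-k\ge n-m+1$, i.e. $k\le m-1$; and $k=m-1$ is impossible, since then equality in Cauchy--Schwarz forces $\M_\infty^{n-k}$ to be a round sphere and $X_\infty$ to be a shrinking round cylinder, which is not eternal. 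Thus $k\le m-2$.

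\textbf{Step 4 (translator, and the bowl when $k=m-2$).} The limit $X_\infty$ is a weakly convex eternal solution with bounded curvature on which $H_\infty$ attains a positive interior maximum (at the origin, by Step 1), so Hamilton's Harnack inequality for mean curvature flow \cite{HamiltonHarnack} forces $X_\infty$ to be a translating soliton. Writing the translation vector as $V_1+V_2$ with $V_1$ tangent to the $\R^k$-factor and $V_2\in\R^{n-k+1}$, the normal of $X_\infty$ is orthogonal to $\R^k$, so $\M_\infty^{n-k}$ is itself a translator in $\{0\}\times\R^{n-k+1}$ with translation vector $V_2$; being locally uniformly convex, it is a locally uniformly convex translating solution, as claimed. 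Finally, when $k=m-2$ the cross-section $\Sigma:=\M_\infty^{n-m+2}$ is a strictly convex translator in $\R^{n-m+3}$ with $\vert A\vert^2<\tfrac{1}{(n-m+2)-1}H^2$; moreover $\Sigma$ is $\alpha$-noncollapsed, a bound inherited in the limit from the quadratic pinching of the original flow. The uniqueness theorem for the bowl soliton among noncollapsed translators (Haslhofer) then identifies $\Sigma$ with the bowl soliton.

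\textbf{Main obstacle.} The delicate steps are (i) verifying that the truncated type-II point selection genuinely yields a two-sided eternal limit, so that Hamilton's Harnack inequality applies and produces a translator rather than merely an ancient solution, and (ii) securing the $\alpha$-noncollapsing of the limiting translator when $k=m-2$: one cannot invoke Andrews' noncollapsing directly, since the flow in $\mathbb S_K^{n+1}$ is only immersed and not a priori mean-convex, so one must show that the quadratic pinching \eqref{eq:strict quadratic pinching} propagates a noncollapsing bound which survives the blow-up. The remaining steps are routine adaptations of the type-I argument and of \cite{HuSi99a}.
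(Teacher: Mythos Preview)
Your Steps 1--3 reproduce the paper's argument faithfully: Hamilton's type-II point selection as in \cite[Section 4]{HuSi99a}, compactness via the derivative estimates, weak convexity from Theorem~\ref{thm:convexity estimate}, the non-strict cylindrical and gradient bounds passed to the limit, the splitting, and the strong maximum principle for strictness. The paper's own proof is only a pointer to \cite{HuSi99a} plus Theorems~\ref{thm:cylindrical estimate}, \ref{thm:gradient estimate}, \ref{thm:convexity estimate}, so your sketch is in fact more explicit here.

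The genuine divergence is in Step~4, the bowl identification when $k=m-2$. You route this through Haslhofer's uniqueness for $\alpha$-noncollapsed translators and then flag noncollapsing as the ``main obstacle'' --- correctly, since Andrews-type noncollapsing is unavailable for immersed, not-a-priori-mean-convex flows in $\mathbb S_K^{n+1}$, and there is no result in the paper producing it from the quadratic pinching. The paper avoids this entirely: it invokes the classification from \cite{BL17}, which identifies the bowl among convex translators in $\R^{n-m+3}$ directly from the \emph{pinching} $\vert A\vert^2<\tfrac{1}{n-m+1}H^2$ together with the inherited sharp gradient estimate $\vert\cd A\vert^2\le c\big(\tfrac{1}{n-m+1}H^2-\vert A\vert^2\big)H^2$. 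That estimate is precisely what survives on the limit (your Step~3), and it substitutes for noncollapsing in the uniqueness argument. So your obstacle (ii) is real for the route you chose, but it simply does not arise in the paper's route; replace Haslhofer's theorem by the pinched-translator classification of \cite{BL17} and your proof closes without further work. Your obstacle (i) is routine and is exactly the content of \cite[Section 4]{HuSi99a}.
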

\begin{proof}
Using Theorems \ref{thm:cylindrical estimate}, \ref{thm:gradient estimate} and \ref{thm:convexity estimate}, the conclusion is obtained by proceeding as in \cite[Section 4]{HuSi99a} and applying the classification result from \cite{BL17} (strict inequality in the quadratic pinching on the limit follows from the strong maximum principle).
\end{proof}

\bibliographystyle{plain}
\bibliography{../bibliography}

\end{document}